\newtheorem{theorem}{Theorem}[section]
\newtheorem{corollary}[theorem]{Corollary}
\newtheorem{lemma}[theorem]{Lemma}
\newtheorem{proposition}[theorem]{Proposition}
\begin{document}

\title[Derivations with values in quasi-normed bimodules]{Derivations with values in quasi-normed bimodules of locally measurable operators}
\author{A. F. Ber}
\address{Department of Mathematics,National University of Uzbekistan,
Vuzgorodok, 100174, Tashkent, Uzbekistan}
\email{ber@ucd.uz}

\author{V. I. Chilin }
\address{Department of Mathematics, National University of Uzbekistan,
Vuzgorodok, 100174, Tashkent, Uzbekistan}
\email{chilin@ucd.uz}

\author{G. B. Levitina }
\address{Department of Mathematics, National University of Uzbekistan,
Vuzgorodok, 100174, Tashkent, Uzbekistan}
\email{bob\_galina@mail.ru}

\keywords{Derivation, von Neumann algebra, quasi-normed bimodule, locally measurable
operator}

\subjclass{46L51, 46L52, 46L57}
\begin{abstract}
We prove that every derivation acting on a von Neumann algebra $\mathcal{M}$ with values in a quasi-normed bimodule of locally measurable operators affiliated with $\mathcal{M}$ is necessarily inner.
\end{abstract}

\maketitle
\date{\today}

\section{Introduction}
One of the important results in the theory of derivations in
Banach bimodules is the Theorem of J. R. Ringrose on automatic
continuity of every derivation acting on a $C^*$-algebra $\mathcal{M}$
with values in a Banach $\mathcal{M}$-bimodule \cite{Ringrose}. This theorem
extends the well-known result that every derivation of a
$C^*$-algebra $\mathcal{M}$ is automatically norm continuous
\cite{Sak}. In the case when $\mathcal{M}$ is an $AW^*$-algebra (in
particular, $W^*$-algebra), every derivation on $\mathcal{M}$ is
inner \cite{Olesen}, \cite{Sak}.

Significant examples of $W^*$-modules are non-commutative
symmetric spaces of measurable operators affiliated with a von Neumann algebra. At the present time the theory of symmetric spaces is actively developed (see, e.g.
\cite{DDdP}, \cite{K-S}) and it gives useful applications both in the geometry of Banach spaces and in the theory of unbounded operators. Every non-commutative symmetric space
is a  solid linear space in the $*$-algebra $S(\mathcal{M},\tau)$
of all $\tau$-measurable operators affiliated with a von Neumann algebra $\mathcal{M}$, where $\tau$ is a faithful normal semifinite trace on $\mathcal{M}$ \cite{Ne}. The algebra $S(\mathcal{M},\tau)$ equipped with the natural topology  $t_\tau$ of convergence in measure generated by the trace  $\tau$ is a complete metrizable topological algebra. In its turn the algebra $S(\mathcal{M},\tau)$ represents a solid $*$-subalgebra of the $*$-algebra $LS(\mathcal{M})$ of
all locally measurable operators, affiliated with a von Neumann
algebra $\mathcal{M}$ \cite{San}, \cite{Yead}, in addition, the latter algebra $LS(\mathcal{M})$ with the natural topology  $t(\mathcal{M})$ of convergence locally in measure is a complete topological $*$-algebra \cite{Yead}.

In \cite{AAK,AK,BCS_2,BCS_3} there are established significant results in description of derivations of these algebras. In particular, it is proved that for every
$t(\mathcal{M})$-continuous derivation $\delta$ acting on the algebra $LS(\mathcal{M})$
 there exists  $a\in LS(\mathcal{M})$, such
that $\delta(x) = \delta_a(x) = ax-xa=[a,x]$ for all $x\in LS(\mathcal{M})$, that is the
derivation $\delta$ is inner. One of the main corollary of this result provides a full description of  derivations $\delta$ acting from $\mathcal{M}$ into a Banach $\mathcal{M}$-bimodule $\mathcal{E}$ of locally measurable operators affiliated with $\mathcal{M}$. More precisely it is shown that any derivation an a von Neumann algebra $\mathcal{M}$ with values in a Banach $\mathcal{M}$-bimodule $\mathcal{E}$ is inner \cite{BCS_3}.

In the present paper we establish the similar property of derivations in more general case when $\mathcal{E}$ is a quasi-normed $\mathcal{M}$-bimodule of locally measurable operators affiliated with $\mathcal{M}$.
The proof proceed in several stages. Firstly, in Section 2 we establish the version of Ringrose Theorem for arbitrary quasi-normed $C^*$-bimodules without the assumption of their completeness. After that in section 3 we supply the proof
of the main result of the present paper (Theorem \ref{t_mbm}) showing that every  derivation $\delta:\mathcal{M}\rightarrow \mathcal{E}$ is necessarily inner, i.e.  $\delta = \delta_d$ for some $d \in \mathcal{E}$.
In particular, $\delta$ is a continuous
derivation from $(\mathcal{M},\|\cdot\|_{\mathcal{M}})$ into
$(\mathcal{E},\|\cdot\|_{\mathcal{E}})$. In addition, the operator
$d\in\mathcal{E}$ may be chosen so
that  $\|d\|_{\mathcal{E}}\leq
2C_\mathcal{E}\|\delta\|_{\mathcal{M}\rightarrow\mathcal{E}}$, where $C_\mathcal{E}$ is the modulus of concavity of the quasi-norm $\|\cdot\|_\mathcal{E}$.

We use terminology and notations from the von Neumann algebra
theory \cite{KR,Tak} and the theory of locally measurable
operators from \cite{MCh,Yead}.

\section{Preliminaries}

Let $B(H)$ be the $*$-algebra of all
bounded linear operators acting in  a Hilbert space $H$, and let $\mathbf{1}$ be the
identity operator on $H$. Let $\mathcal{M}$ be a von Neumann algebra
 acting on $H$, and let $\mathcal{Z}(\mathcal{M})$ be
the centre of $\mathcal{M}$. Denote by
$\mathcal{P}(\mathcal{M})=\{p\in\mathcal{M}:\ p=p^2=p^*\}$ the
lattice of all projections in $\mathcal{M}$ and by
$\mathcal{P}_{fin}(\mathcal{M})$  the set of all finite projections in
$\mathcal{M}$.

A densely-defined closed linear operator $x$
affiliated with $\mathcal{M}$  is said to be \emph{measurable}
with respect to $\mathcal{M}$ if there exists a sequence
$\{p_n\}_{n=1}^\infty\subset \mathcal{P}(\mathcal{M})$ such that
$p_n\uparrow \mathbf{1},\ p_n(H)\subset \mathfrak{D}(x)$ and
$p_n^\bot=\mathbf{1}-p_n\in \mathcal{P}_{fin}(\mathcal{M})$ for every
$n\in\mathbb{N}$, where $\mathfrak{D}(x)$ is the domain of $x$ and
$\mathbb{N}$ is the set of all natural numbers. The set
$S(\mathcal{M})$ of all measurable operators is a unital
$*$-algebra over the field $\mathbb{C}$ of complex numbers with respect to strong sum $\overline{x+y}$, strong product $\overline{xy}$ and the adjoint operation $x^*$ \cite{Seg}.

A densely-defined closed linear operator $x$ affiliated with
$\mathcal{M}$ is called \emph{locally measurable} with respect to
$\mathcal{M}$ if there is a sequence $\{z_n\}_{n=1}^\infty \subset \mathcal{P}(\mathcal{Z}(\mathcal{M}))$
 such that $z_n \uparrow
\mathbf{1},\ z_n(H)\subset\mathfrak{D}(x)$ and $xz_n\in
S(\mathcal{M})$ for all $n\in\mathbb{N}$.

The set $LS(\mathcal{M})$ of all locally measurable operators
(with respect to $\mathcal{M}$) is a unital $*$-algebra over the
field $\mathbb{C}$ with respect to the same algebraic operations as in $S(\mathcal{M})$, in addition  $\mathcal{M}$ and $S(\mathcal{M})$ are
$*$-subalgebras of $LS(\mathcal{M})$.

For every subset $E\subset LS(\mathcal{M})$, the set of all
self-adjoint (respectively, positive) operators in $E$ is denoted by
$E_h$ (respectively, $E_+$). The partial order in $LS_h(\mathcal{M})$ is
defined by its cone $LS_+(\mathcal{M})$ and is denoted by $\leq$.

Let $x$ be a closed linear operator with the dense domain $\mathfrak{D}(x)$
in $H$, let $x=u|x|$ be the polar decomposition of the operator
$x$, where $|x|=(x^*x)^{\frac{1}{2}}$ and $u$ is a  partial
isometry in $B(H)$ such that $u^*u$ is the right support $r(x)$ of
$x$.  It is known that $x\in LS(\mathcal{M})$ (respectively, $x\in
S(\mathcal{M})$) if and only if $|x|\in LS(\mathcal{M})$
(respectively, $|x|\in S(\mathcal{M})$) and $u\in
\mathcal{M}$~\cite[\S\S\,2.2, 2.3]{MCh}. For every $x\in LS(\mathcal{M})$ the projection  $s(x)=l(x)\vee r(x)$, where $l(x)$ and $r(x)$ are left and right supports of $x$ respectively, is called the support of $x$. If $x \in LS_h(\mathcal{M})$, then the spectral family
of projections $\{E_\lambda(x)\}_{\lambda\in
  \mathbb{R}}$ for $x$ belongs to
$\mathcal{M}$~\cite[\S\,2.1]{MCh}, in addition, $s(x) = l(x) = r(x)$.

Denote by $\|\cdot\|_\mathcal{M}$ the $C^*$-norm in the von Neumann algebra $\mathcal{M}$. We need the following property of partial order in the algebra $LS(\mathcal{M})$.

\begin{proposition}\cite[Proposition 6.1]{BCS_3}
\label{mchext} Let $\mathcal{M}$ be a von Neumann algebra acting on the Hilbert space $H$, $x,y\in LS_+(\mathcal{M})$ and $y\leq x$. Then $y^{1/2}=ax^{1/2}$ for some $a\in s(x)\mathcal{M}s(x),\ \|a\|_{\mathcal{M}}\leq 1$, in particular, $y=axa^*$.
\end{proposition}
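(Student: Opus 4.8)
The plan is to give a direct, purely algebraic proof inside the $*$-algebra $LS(\mathcal{M})$, realizing the desired $a$ as the ``quotient'' $y^{1/2}x^{-1/2}$, where $x^{-1/2}$ denotes the inverse of $x^{1/2}$ on its support. First I would record two order-theoretic facts that follow from $0\le y\le x$. The first is that $s(y)\le s(x)$: conjugating $y\le x$ by $\mathbf{1}-s(x)$ gives $0\le(\mathbf{1}-s(x))\,y\,(\mathbf{1}-s(x))\le(\mathbf{1}-s(x))\,x\,(\mathbf{1}-s(x))=0$, whence $y^{1/2}(\mathbf{1}-s(x))=0$ and therefore $s(y)=r(y)\le s(x)$. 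The second is that conjugation by an arbitrary $z\in LS(\mathcal{M})$ is order preserving, since $y\le x$ implies $z(x-y)z^{*}=\bigl(z(x-y)^{1/2}\bigr)\bigl(z(x-y)^{1/2}\bigr)^{*}\ge 0$.

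Next I would introduce $x^{-1/2}\in LS(\mathcal{M})$, obtained from the Borel functional calculus of $x\in LS_{+}(\mathcal{M})$ applied to the function $\lambda\mapsto\lambda^{-1/2}$ on $(0,\infty)$ and $0$ at $0$; this is legitimate because the functional calculus of a positive locally measurable operator stays inside $LS(\mathcal{M})$, and it satisfies $x^{-1/2}x^{1/2}=x^{1/2}x^{-1/2}=s(x)$ and $s(x^{-1/2})=s(x)$. I then set $a:=y^{1/2}x^{-1/2}\in LS(\mathcal{M})$. The remaining identities are formal computations in the $*$-algebra $LS(\mathcal{M})$: using $s(y)\le s(x)$ one checks $a=s(x)\,a\,s(x)$ (the right support is controlled by $x^{-1/2}$ and the left support by $y^{1/2}$), and $a\,x^{1/2}=y^{1/2}x^{-1/2}x^{1/2}=y^{1/2}s(x)=y^{1/2}$, which also yields $y=a\,x\,a^{*}$ after multiplying by the adjoint.

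It remains to prove the norm bound $\|a\|_{\mathcal{M}}\le 1$, and this is the crux. I would compute $a^{*}a=x^{-1/2}y^{1/2}\cdot y^{1/2}x^{-1/2}=x^{-1/2}\,y\,x^{-1/2}$ and then invoke order preservation of conjugation by $z=x^{-1/2}$: from $y\le x$ we obtain $a^{*}a=x^{-1/2}\,y\,x^{-1/2}\le x^{-1/2}\,x\,x^{-1/2}=s(x)\le\mathbf{1}$. Thus $a^{*}a$ is a bounded element of $\mathcal{M}$ with $\|a^{*}a\|_{\mathcal{M}}\le 1$. The main obstacle is the final passage from ``$a^{*}a$ bounded'' to ``$a$ bounded and in $\mathcal{M}$'', since a priori $a$ lies only in $LS(\mathcal{M})$. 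I would resolve this through the polar decomposition $a=u|a|$ valid in $LS(\mathcal{M})$, where $|a|=(a^{*}a)^{1/2}\in\mathcal{M}$ by the previous step and $u\in\mathcal{M}$ is a partial isometry; hence $a=u|a|\in\mathcal{M}$ with $\|a\|_{\mathcal{M}}=\||a|\|_{\mathcal{M}}=\|a^{*}a\|_{\mathcal{M}}^{1/2}\le 1$. Finally I would confirm that the support normalization $a=s(x)\,a\,s(x)$ places $a$ precisely in $s(x)\mathcal{M}s(x)$, which completes the proof.
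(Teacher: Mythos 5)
Your overall strategy---realize $a$ as the ``quotient'' $y^{1/2}x^{-1/2}$ and estimate $a^*a$ by conjugating the inequality $y\leq x$---rests on a false premise: for $x\in LS_+(\mathcal{M})$ the operator $x^{-1/2}$ (functional calculus with $\lambda\mapsto\lambda^{-1/2}$ on $(0,\infty)$, $0\mapsto 0$) need \emph{not} belong to $LS(\mathcal{M})$. Functional calculus keeps you inside $LS(\mathcal{M})$ only for Borel functions that are bounded on bounded sets, and $\lambda^{-1/2}$ blows up at $0$. Concretely, take $\mathcal{M}=B(H)$ with $H$ infinite-dimensional and separable: there the finite projections are the finite-rank ones and the centre is trivial, so $LS(\mathcal{M})=S(\mathcal{M})=B(H)$; for the positive injective compact operator $x=\mathrm{diag}(1,\tfrac12,\tfrac13,\dots)$ one gets $x^{-1/2}=\mathrm{diag}(1,\sqrt{2},\sqrt{3},\dots)$, which is unbounded and hence lies outside $LS(\mathcal{M})$. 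Consequently every step you describe as ``a formal computation in the $*$-algebra $LS(\mathcal{M})$'' --- the identity $a=s(x)as(x)$, the computation $a^*a=x^{-1/2}yx^{-1/2}$, and above all the crux estimate $x^{-1/2}yx^{-1/2}\leq x^{-1/2}xx^{-1/2}=s(x)$ obtained by conjugating with $z=x^{-1/2}$ --- is performed with an operator that may not be in the algebra. For merely affiliated unbounded operators these rules genuinely fail: products need not be densely defined or closable, the associativity of strong products is unavailable, and one only has $(AB)^*\supseteq B^*A^*$; so none of these manipulations is justified as stated. (Your preliminary steps are fine: $s(y)\leq s(x)$, and order-preservation of conjugation by elements \emph{of} $LS(\mathcal{M})$; the damage is confined to, but fatal for, the use of $x^{-1/2}$.)

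The gap is not cosmetic, because making ``$a=y^{1/2}x^{-1/2}$'' rigorous is essentially the whole content of the proposition; note also that the paper under review does not prove this statement itself but imports it as \cite[Proposition 6.1]{BCS_3}. Two standard repairs exist. One is to replace $x^{-1/2}$ by the genuinely bounded operators $(x^{1/2}+\varepsilon\mathbf{1})^{-1}\in\mathcal{M}$, establish the uniform bound $\|y^{1/2}(x^{1/2}+\varepsilon\mathbf{1})^{-1}\|_{\mathcal{M}}\leq 1$, and pass to a limit as $\varepsilon\downarrow 0$ in a suitable operator topology. The other is the Douglas-factorization route: deduce from $y\leq x$ the form inequality $\|y^{1/2}\xi\|\leq\|x^{1/2}\xi\|$ for $\xi\in\mathfrak{D}(x^{1/2})$ --- this itself needs an argument, since $\leq$ in $LS(\mathcal{M})$ is defined via the cone of strong differences --- then define $a$ on $\mathrm{ran}(x^{1/2})$ by $a(x^{1/2}\xi):=y^{1/2}\xi$, extend by continuity to $s(x)(H)$ and by $0$ on its orthogonal complement, and verify $a\in\mathcal{M}$ (for instance by checking that $a$ commutes with the commutant $\mathcal{M}'$) together with $ax^{1/2}=y^{1/2}$ as an identity in $LS(\mathcal{M})$. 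Either way, the bounded operator $a$ must be constructed first; it cannot be obtained by dividing inside $LS(\mathcal{M})$.
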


Now, let us recall the definition of the local measure topology.
If $\mathcal{M}$ is a commutative von Neumann algebra, then
$\mathcal{M}$ is $*$-isomorphic to the $*$-algebra
$L^\infty(\Omega,\Sigma,\mu)$ of all essentially bounded
measurable complex-valued functions defined on a measure space
$(\Omega,\Sigma,\mu)$ with the measure $\mu$ satisfying the direct
sum property (we identify functions that are equal almost
everywhere) (see e.g. \cite[Ch. III, \S 1]{Tak}).
The direct sum property of a measure $\mu$ means
that the Boolean algebra $\mathcal{P}(L^\infty(\Omega,\Sigma,\mu))$  of all projections of a commutative von Neumann algebra
$L^\infty(\Omega,\Sigma,\mu)$ is order complete, and for any
non-zero $\chi_E  \in \mathcal{P}(L^\infty(\Omega,\Sigma,\mu))$ there exists a non-zero
projection $\chi_F \leq \chi_E$ such that $\mu(F)<\infty$, where $E, F \in\Sigma$, $\chi_E(\omega)=1,\ \omega\in E$
and $\chi_E(\omega)=0$, when $\omega\notin E$.

Consider the $*$-algebra
$LS(\mathcal{M})=S(\mathcal{M})=L^0(\Omega,\Sigma,\mu)$ of all
measurable almost everywhere finite complex-valued functions
defined on $(\Omega,\Sigma,\mu)$ (functions that are equal almost
everywhere are identified).  Define on $L^0(\Omega,\Sigma,\mu)$
the local measure topology $t(L^\infty(\Omega))$, that is, the
Hausdorff vector topology, whose base of neighbourhoods of zero is
given by
$$
  W(B,\varepsilon,\delta):= \{f\in\ L^0(\Omega,\, \Sigma,\, \mu)
  \colon
  \ \hbox{there exists a set} \ E\in \Sigma\
  \mbox{such that}
  $$
  $$
   E\subseteq B, \ \mu(B\setminus
  E)\leq\delta, \ f\chi_E \in L^\infty(\Omega,\Sigma,\mu), \
  \|f\chi_E\|_{{L^\infty}(\Omega,\Sigma,\mu)}\leq\varepsilon\},
$$
where $\varepsilon, \ \delta >0$, $B\in\Sigma$, $\mu(B)<\infty$.

The topology
$t(L^\infty(\Omega))$ does not change if the measure $\mu$ is
replaced with an equivalent measure \cite{Yead}.

Now let $\mathcal{M}$ be an arbitrary von Neumann algebra and let
$\varphi$ be a $*$-isomorphism from $\mathcal{Z}(\mathcal{M})$
onto the $*$-algebra $L^\infty(\Omega,\Sigma,\mu)$.  Denote by
$L^+(\Omega,\, \Sigma,\, m)$ the set of all measurable real-valued
functions defined on $(\Omega,\Sigma,\mu)$ and taking values in
the extended half-line $[0,\, \infty]$ (functions that are equal
almost everywhere are identified).
Let $\mathcal{D}\colon \mathcal{P}(\mathcal{M})\to
L^+(\Omega,\Sigma,\mu)$ be  a dimensional function on $\mathcal{P}(\mathcal{M})$ \cite{Seg}.

For arbitrary scalars $\varepsilon , \delta >0$ and a set $B\in
\Sigma$, $\mu(B)<\infty$, we set

 $$ V(B,\varepsilon, \delta ) := \{x\in LS(\mathcal{M})\colon
  \mbox{there exist} \ p\in \mathcal{P}(\mathcal{M}),
  z\in \mathcal{P}(\mathcal{Z}(\mathcal{M})),
    \mbox{such that}$$
    $$ xp\in \mathcal{M},
  \|xp\|_{\mathcal{M}}\leq\varepsilon,
  \ \varphi(z^\bot) \in W(B,\varepsilon,\delta), \
    \mathcal{D}(zp^\bot)\leq\varepsilon \varphi(z)\}.$$

It was shown in~\cite{Yead} that the system of sets
\begin{equation}
\label{eq_xV}
 \{x+V(B,\,\varepsilon,\,\delta)\colon \ x \in LS(\mathcal{M}),\
 \varepsilon, \ \delta >0,\ B\in\Sigma,\ \mu(B)<\infty\}
\end{equation}
defines a Hausdorff vector topology $t(\mathcal{M})$ on
$LS(\mathcal{M})$ such that the sets of \eqref{eq_xV} form a neighbourhood base of an
operator $x\in LS(\mathcal{M})$. The topology $t(\mathcal{M})$ on $LS(\mathcal{M})$ is called the \textit{local
measure topology}.
It is known that
$(LS(\mathcal{M}), t(\mathcal{M}))$ is a complete topological
$*$-algebra, and the topology $t(\mathcal{M})$ does not depend on
a choice of dimension function $\mathcal{D}$~  and on a choice
of $*$-isomorphism $\varphi$ (see e.g. \cite[\S3.5]{MCh},
\cite{Yead}).

Let us mention the following important property of the topology $t(\mathcal{M})$.

\begin{proposition}\cite[Proposition 2.5]{BCS_3}
\label{p2_2}
The von Neumann algebra $\mathcal{M}$ is dense in  $(LS(\mathcal{M}),t(\mathcal{M}))$.
\end{proposition}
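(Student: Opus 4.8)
The plan is to reduce to positive operators and then approximate by spectral truncation, checking convergence directly against the neighbourhood base $\{V(B,\varepsilon,\delta)\}$ of the topology $t(\mathcal{M})$. Since $t(\mathcal{M})$ is a vector topology and every $x\in LS(\mathcal{M})$ is a complex-linear combination of four operators from $LS_+(\mathcal{M})$ (take real and imaginary parts, then positive and negative parts), and since $\mathcal{M}$ is a linear subspace of $LS(\mathcal{M})$, it suffices to approximate a single $x\in LS_+(\mathcal{M})$ by elements of $\mathcal{M}$. Fix such an $x$ with spectral family $\{E_\lambda(x)\}$, and for $n\in\mathbb{N}$ put $e_n=E_n(x)\in\mathcal{P}(\mathcal{M})$, so that $xe_n\in\mathcal{M}$ with $\|xe_n\|_{\mathcal{M}}\leq n$, while the complementary spectral projection $e_n^\bot=\mathbf{1}-e_n=E_{(n,\infty)}(x)$ satisfies $e_n^\bot\downarrow 0$. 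I would then show that $xe_n\to x$ in $t(\mathcal{M})$, equivalently $xe_n^\bot=x-xe_n\to 0$.

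To this end, fix a basic neighbourhood $V(B,\varepsilon,\delta)$ of zero and take $p:=e_n$. Then $(xe_n^\bot)p=xe_n^\bot e_n=0\in\mathcal{M}$, so the conditions $xp\in\mathcal{M}$ and $\|xp\|_{\mathcal{M}}\leq\varepsilon$ hold trivially, and $p^\bot=e_n^\bot$. It remains to produce a central projection $z\in\mathcal{P}(\mathcal{Z}(\mathcal{M}))$ with $\varphi(z^\bot)\in W(B,\varepsilon,\delta)$ and $\mathcal{D}(ze_n^\bot)\leq\varepsilon\,\varphi(z)$. Using central modularity of the dimension function, $\mathcal{D}(ze_n^\bot)=\varphi(z)\,\mathcal{D}(e_n^\bot)$, so the natural choice is to let $z$ be the central projection with $\varphi(z)$ the characteristic function of the set $\{\omega:\mathcal{D}(e_n^\bot)(\omega)\leq\varepsilon\}$; then $\mathcal{D}(ze_n^\bot)\leq\varepsilon\,\varphi(z)$ holds by construction. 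Since $\varphi(z^\bot)$ is a $\{0,1\}$-valued function, unwinding the definition of $W(B,\varepsilon,\delta)$ shows that $\varphi(z^\bot)\in W(B,\varepsilon,\delta)$ is equivalent to the measure estimate $\mu\big(B\cap\{\mathcal{D}(e_n^\bot)>\varepsilon\}\big)\leq\delta$.

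The crux is therefore to prove that $\mu\big(B\cap\{\mathcal{D}(E_{(n,\infty)}(x))>\varepsilon\}\big)\to 0$ as $n\to\infty$, and this is precisely where local measurability is used. I would choose central projections $z_k\uparrow\mathbf{1}$ with $xz_k\in S(\mathcal{M})$; writing $A_k=\{\varphi(z_k)=1\}$ gives $A_k\uparrow\Omega$ modulo null sets. On $A_k$ one has $\mathcal{D}(e_n^\bot)=\mathcal{D}(z_ke_n^\bot)=\mathcal{D}(E_{(n,\infty)}(xz_k))$, and since the positive operator $xz_k$ is measurable, the projections $E_{(n,\infty)}(xz_k)$ are finite for large $n$ and decrease to $0$; by normality (order continuity) of $\mathcal{D}$ this forces $\mathcal{D}(e_n^\bot)\downarrow 0$ almost everywhere on each $A_k$, hence a.e. on $\Omega$. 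Consequently the characteristic functions of $\{\mathcal{D}(e_n^\bot)>\varepsilon\}$ decrease to $0$ a.e., and as $\mu(B)<\infty$, dominated convergence yields $\mu\big(B\cap\{\mathcal{D}(e_n^\bot)>\varepsilon\}\big)\to 0$. Thus $xe_n^\bot\in V(B,\varepsilon,\delta)$ for all sufficiently large $n$, which completes the argument.

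I expect the main obstacle to lie not in any single estimate but in the bookkeeping around the dimension function: justifying the central modularity identity $\mathcal{D}(ze_n^\bot)=\varphi(z)\,\mathcal{D}(e_n^\bot)$ together with its order continuity, and then carefully translating membership in $W(B,\varepsilon,\delta)$ into the pointwise measure estimate on $B$. Once these technical properties of $\mathcal{D}$ are in place, local measurability supplies exactly the a.e.\ convergence needed to absorb the ``large'' spectral part of $x$ into the central complement $z^\bot$.
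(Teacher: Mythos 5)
Your proof is correct. Note that this proposition is not proved in the paper at all --- it is quoted from \cite{BCS_3} --- so there is no in-paper argument to compare against; your route (reduce to $x\in LS_+(\mathcal{M})$, truncate as $xE_n(x)\in\mathcal{M}$, absorb the tail $xE_{(n,\infty)}(x)$ into a central complement chosen via the dimension function, with local measurability of $x$ supplying the finiteness of $E_{(n,\infty)}(xz_k)$ and hence the a.e.\ decay $\mathcal{D}(E_{(n,\infty)}(x))\downarrow 0$) is exactly the standard proof of this density result, and the two technical points you flag (central modularity $\mathcal{D}(zp)=\varphi(z)\mathcal{D}(p)$ and normality of $\mathcal{D}$, where the decreasing-sequence version indeed requires the eventual finiteness you invoke) are documented properties of the dimension function in \cite{Seg} and \cite{MCh}.
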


Let $\mathcal{M}$ be a semifinite von Neumann algebra acting on the
Hilbert space $H$, let $\tau$ be a faithful normal semifinite trace on
$\mathcal{M}$. An operator $x\in S(\mathcal{M})$ is called \emph{$\tau$-measurable} if for any
$\varepsilon>0$ there exists a projection
$p\in\mathcal{P}(\mathcal{M})$ such that $p(H)\subset
\mathfrak{D}(x)$ and $\tau(p^\bot)<\varepsilon$.

The set $S(\mathcal{M},\tau)$ of all $\tau$-measurable operators
is a  $*$-subalgebra of $S(\mathcal{M})$ and $\mathcal{M}\subset S(\mathcal{M},\tau)$. If the trace $\tau$ is finite, then
$S(\mathcal{M},\tau)=S(\mathcal{M})=LS(\mathcal{M})$.

Let $t_\tau$ be the \textit{measure topology}~\cite{Ne} on
$S(\mathcal{M},\tau)$ whose base of neighbourhoods of zero is given
by
\begin{gather*}
  U(\varepsilon,\delta)=\{x\in S(\mathcal{M},\tau): \ \
  \hbox{there exists a projection} \ \
  p\in \mathcal{P}(\mathcal{M}),
\\
  \hbox{such that}\ \tau(p^\bot)\leq \delta,
  \ xp \in \mathcal{M}, \ \ \|xp\|_\mathcal{M}\leq\varepsilon\}, \ \ \varepsilon>0, \ \delta>0.
\end{gather*}

The pair $(S(\mathcal{M},\tau),t_\tau)$ is a complete metrizable
topological $*$-algebra. Here, the topology $t_\tau$ majorizes the
topology $t(\mathcal{M})$ on $S(\mathcal{M},\tau)$ and, if $\tau$
is a finite trace, the topologies $t_\tau$ and $t(\mathcal{M})$
coincide~\cite{CM}.

\section{Continuity of derivations on a $C^*$-algebra $\mathcal{M}$ with values in a quasi-normed $\mathcal{M}$-bimodule}

Let  $\mathcal{M}$ be a $C^*$-algebra  with identity $\mathbf{1}$ and $X$ be an arbitrary bimodule over  $\mathcal{M}$ with bilinear mappings $(a,x) \rightarrow ax, (a,x) \rightarrow xa : \mathcal{M} \times X \rightarrow X$ such that $\mathbf{1}x=x\mathbf{1}$ for all $x\in X$. By introduction of algebraic operation $\lambda x:=(\lambda \mathbf{1}) x,\lambda\in\mathbb{C},x\in X$, the $\mathcal{M}$-bimodule $X$ become a complex linear space.

Recall that a real function
$\|\cdot\|$ on a complex linear space $X$ is called a \textit{quasi-norm} on $X$, if there exists a constant $ C\geq 1$ such that for all
$x,y\in X,\alpha \in\mathbb{C}$ the following properties hold:

$(i)$. \ \ $\|x\|\geq 0, \|x\|=0 \Leftrightarrow x=0$;

$(ii)$.\ \  $\|\alpha x\|=|\alpha|\|x\|$;

$(iii)$. $\|x+y\|\leq C(\|x\|+\|y\|)$.

The couple $(X,\|\cdot\|)$ is called a quasi-normed space and the least
of all constants $C$  satisfying the  inequality $(iii)$ above is
called the modulus of concavity of the quasi-norm $\|\cdot\|$ and denoted by $C_X$. Every quasi-normed space $(X,\|\cdot\|)$ is a locally bounded Hausdorff topological vector space (see e.g. \cite{Kalton}).

If an $\mathcal{M}$-bimodule $X$ is equipped with a quasi-norm
$\|\cdot \|_X$, then the couple $(X,\|\cdot\|_X)$ is called a quasi-normed $\mathcal{M}$-bimodule if for all $x\in X, a,b\in\mathcal{M}$ the equality
\begin{equation}
\| axb\|_X\leq \| a\|
_{\mathcal{M}}\| b\| _{\mathcal{M}}\|
x\|_X,
\label{ChIVeq21}
\end{equation}
holds, where $\|\cdot\|_\mathcal{M}$ is the $C^*$-norm on $\mathcal{M}$.

Let $X$ be an $\mathcal{M}$-bimodule over a $C^*$-algebra $\mathcal{M}$.
A linear mapping  $\delta:\mathcal{M}\rightarrow X$ is called a \emph{derivation}, if
$\delta(ab)=\delta(a)b+a\delta(b)$ for all $a,b\in\mathcal{M}$. A derivation $\delta:\mathcal{M}\rightarrow X$ is called
\emph{inner}, if there exists an element $d\in X$, such that $\delta(x)=[d,x]=dx-xd$ for all $x\in\mathcal{M}$.

If $(X,\|\cdot\|_X)$ is a quasi-normed $\mathcal{M}$-bimodule, then, by \eqref{ChIVeq21} every inner derivation $\delta:\mathcal{M}\rightarrow X$ is a continuous linear mapping from $(\mathcal{M},\|\cdot\|_\mathcal{M})$ into $(X,\|\cdot\|_X)$.

In the following theorem we show that every derivation  $\delta\colon\mathcal{M}\to X$ is continuous and strengthen the Ringrose Theorem by omitting the assumption of completeness of the space $(X,\|\cdot\|_X)$. Our proof uses the original proof of J.Ringrose \cite[Theorem 2]{Ringrose}.

\begin{theorem}
\label{t_Ring}
Let $(X,\|\cdot\|_X)$ be a quasi-normed $\mathcal{M}$-bimodule. Then every derivation $\delta\colon\mathcal{M}\to X$ is a continuous mapping from $(\mathcal{M},\|\cdot\|_\mathcal{M})$ into $(X,\|\cdot\|_X)$.
\end{theorem}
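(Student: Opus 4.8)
The plan is to reduce the statement to the classical situation treated by Ringrose and then to run his argument in the quasi-normed (rather than normed) category. First I would pass to the completion $(\widetilde{X},\|\cdot\|_{\widetilde X})$ of $(X,\|\cdot\|_X)$. The two bimodule actions, being equicontinuous by \eqref{ChIVeq21}, extend uniquely to $\widetilde X$, and the inequality \eqref{ChIVeq21} persists; thus $\widetilde X$ is a complete quasi-normed $\mathcal{M}$-bimodule, i.e.\ an $F$-space. Since $X$ carries exactly the quasi-norm induced from $\widetilde X$ and $\delta(\mathcal{M})\subseteq X$, the map $\delta\colon\mathcal{M}\to X$ is continuous if and only if $\delta\colon\mathcal{M}\to\widetilde X$ is. By the Aoki--Rolewicz theorem \cite{Kalton} I may also assume that $\|\cdot\|_{\widetilde X}$ is a $p$-norm for some $0<p\le1$ (equivalent to the original quasi-norm, with $p$ determined by $C_X$), so that in $\widetilde X$ a series converges whenever the $p$-norms of its terms are summable. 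This is the only role played by the absence of completeness of $X$ and by the constant $C_X$: the question is now reduced to a derivation into a complete $p$-normed bimodule.

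Next I would introduce the separating space
\[
\mathfrak S(\delta)=\{\,y\in\widetilde X:\ \text{there is } a_n\to0 \text{ in }\mathcal{M} \text{ with } \delta(a_n)\to y \text{ in }\widetilde X\,\}.
\]
Both $\mathcal{M}$ and $\widetilde X$ are $F$-spaces, so by the closed graph theorem the continuity of $\delta$ is equivalent to $\mathfrak S(\delta)=\{0\}$: indeed $\mathfrak S(\delta)=\{0\}$ forces the graph of $\delta$ to be closed. I would first record that $\mathfrak S(\delta)$ is a closed linear subspace and, more importantly, a closed $\mathcal{M}$-subbimodule. The latter follows from the derivation identity $\delta(a c_n b)=a\,\delta(c_n)\,b+\delta(a)\,c_n b+a c_n\,\delta(b)$: if $c_n\to0$ and $\delta(c_n)\to y$, then by \eqref{ChIVeq21} the last two terms tend to $0$ while $a\,\delta(c_n)\,b\to ayb$ and $ac_nb\to0$, whence $ayb\in\mathfrak S(\delta)$.

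The heart of the proof, and the main obstacle, is to show $\mathfrak S(\delta)=\{0\}$, and here I would follow Ringrose's original spectral argument \cite{Ringrose}. Assuming $\mathfrak S(\delta)\ne\{0\}$, one constructs, using the continuous functional calculus for self-adjoint elements of $\mathcal{M}$, a sequence of elements with mutually orthogonal supports and prescribed norms on which the derivation must grow; forming the associated series $a=\sum_n\lambda_n a_n$, which converges in the complete algebra $\mathcal{M}$, one then isolates the individual images $\delta(a_n)$ from $\delta(a)$ by multiplying on both sides with the support projections and invoking the derivation identity together with \eqref{ChIVeq21}. This bounds $\|\delta(a_n)\|_{\widetilde X}$ and contradicts the growth built into the construction. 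The delicate point, relative to Ringrose's normed setting, is that all the summability and partial-sum estimates must be carried out with the $p$-norm rather than an ordinary norm; the reduction to a $p$-norm in the first step and the joint continuity of the module action (a consequence of \eqref{ChIVeq21}, preserved under passage to an equivalent quasi-norm) are exactly what make these estimates go through.

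Finally, having shown $\mathfrak S(\delta)=\{0\}$, the closed graph theorem yields continuity of $\delta\colon\mathcal{M}\to\widetilde X$, hence of $\delta\colon\mathcal{M}\to X$, completing the proof.
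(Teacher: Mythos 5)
Your preliminary reductions are all sound as far as they go: the completion $\widetilde X$ exists, the bimodule actions extend with \eqref{ChIVeq21} preserved, the separating space $\mathfrak{S}(\delta)$ is a closed sub-bimodule by exactly the computation you give, and (since $\mathcal{M}$ and $\widetilde X$ are $F$-spaces) continuity is indeed equivalent to $\mathfrak{S}(\delta)=\{0\}$ by the closed graph theorem. But these steps only normalize the setting: a complete $p$-normed bimodule is still not a Banach bimodule, so even after them you cannot invoke Ringrose's theorem as a black box, and the entire content of the theorem is concentrated in your third paragraph --- which is a gap, not a proof. You assert that ``Ringrose's original spectral argument'' shows $\mathfrak{S}(\delta)=\{0\}$ by building one orthogonal sequence, summing a series $a=\sum_n\lambda_n a_n$ in $\mathcal{M}$, and isolating the images $\delta(a_n)$ by multiplying with support projections. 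That is not how Ringrose's argument runs, and the sketch as stated does not close. Ringrose's proof (and the paper's adaptation of it) is organized around the ideal $\mathcal{J}=\{x\in\mathcal{M}:\ u\mapsto\delta(xu)\ \text{is continuous}\}$ and has three separate steps: (i) $\mathcal{J}$ is a closed two-sided ideal, closedness coming from the uniform boundedness principle applied to $S_x(u)=x\delta(u)$; (ii) $\delta|_{\mathcal{J}}$ is continuous, via the factorization lemma $x_n=yz_n$ with $y=(\sum_n x_nx_n^*)^{1/4}\in\mathcal{J}$ and $\|z_n\|_\mathcal{M}\le 1$; (iii) $\mathcal{M}/\mathcal{J}$ is finite-dimensional, via the orthogonal sequence argument. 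In step (iii) the ``prescribed growth'' $\|\delta(y_j^2u_j)\|_X\ge C_X(\|\delta(y_j)\|_X+j)$ is available only because $y_j^2\notin\mathcal{J}$ makes the map $u\mapsto\delta(y_j^2u)$ unbounded, and the isolation is done purely through the derivation identity $\delta(y_jz)=\delta(y_j)z+y_j\delta(z)$ together with $\|y_j\delta(z)\|_X\le\|y_j\|_\mathcal{M}\|\delta(z)\|_X$; no derivatives of projections ever appear. In your setup there is no ideal, hence no source of unboundedness from which to manufacture growth, and your isolation-by-projections would produce terms of the form $\delta(p_n)ap_n+p_na\delta(p_n)$ whose sizes you have no way to control. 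So the heart of the proof is missing.

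Two smaller points. First, the paper shows your reductions are actually unnecessary: the uniform boundedness principle in the form of \cite[Theorem 2.8]{Rudin} requires completeness only of the domain $\mathcal{M}$, with the target an arbitrary topological vector space, so the whole argument can be run in the possibly incomplete bimodule $X$ itself, tracking the constant $C_X$ explicitly; that is how the completeness hypothesis is genuinely removed, rather than by completion. Second, a technical wrinkle in your first step: a quasi-norm need not be continuous for the topology it generates, so to extend $\|\cdot\|_X$ to $\widetilde X$ you should pass to the equivalent $p$-norm (Aoki--Rolewicz) \emph{before} forming the completion, not after.
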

\begin{proof}
Set $$\mathcal{J}=\{x\in\mathcal{M}| \mbox{ the mapping } u\mapsto\delta(xu) \mbox{ from } \mathcal{M} \mbox{ into } X \mbox{ is continuous } \}.$$
Exactly as in \cite[Theorem 2]{Ringrose} we obtain that $\mathcal{J}$ is a two-sided ideal in $\mathcal{M}$.

Since $\delta(xu)=\delta(x)u+x\delta(u)$ we have
$$\mathcal{J}=\{x\in\mathcal{M}| S_x(u):=x\delta(u)  \mbox{ is continuous map from } \mathcal{M} \mbox{ into } X\}.$$
Let us show that $\mathcal{J}$ is a closed ideal in $\mathcal{M}$. Let $\{x_n\}_{n=1}^\infty\subset \mathcal{J}, x\in\mathcal{M}$ and $\|x_n-x\|_\mathcal{M}\rightarrow 0$ as $n\rightarrow\infty$. Then every $S_{x_n}$ is a continuous mapping from $\mathcal{M}$ into $X$ and for every fixed $u\in\mathcal{M}$ we have
$$\|S_x(u)-S_{x_n}(u)\|_X=\|x\delta(u)-x_n\delta(u)\|_X \leq\|x-x_n\|_\mathcal{M}\|\delta(u)\|_X\rightarrow 0.$$

The principle of uniform boundedness (see e.g. \cite[Theorem 2.8]{Rudin}) implies that $S_x$ is continuous, that is $x\in\mathcal{J}$. Thus, $\mathcal{J}$ is a closed two-sided ideal in $\mathcal{M}$.

Now, we show that the restriction $\delta|_\mathcal{J}$ is continuous. Suppose the contrary. Then there exists a sequence $\{x_n\}_{n=1}^\infty\subset\mathcal{J}$ such that
$$\sum_{n=1}^\infty\|x_n\|_\mathcal{M}^2\leq 1 \mbox{ and } \|\delta(x_n)\|_X\rightarrow\infty.$$
 Set $y=(\sum_{n=1}^\infty x_nx_n^*)^{\frac{1}{4}}$. Then
$y\in\mathcal{J}_+,\|y\|_\mathcal{M}\leq 1 \mbox{ and } x_nx_n^*\leq y^4.$ By \cite[Lemma 1]{Ringrose} there exists $z_n\in\mathcal{J}$ with $\|z_n\|_\mathcal{M}\leq 1$ such that $x_n=yz_n$. Hence, $\|\delta(yz_n)\|_X=\|\delta(x_n)\|_X\rightarrow\infty,$ that is the mapping $u\mapsto\delta(yu)$ is unbounded. That contradicts the inclusion $y\in\mathcal{J}$. Thus, the restriction $\delta|_\mathcal{J}$ is a continuous mapping.

Next, we claim that the quotient algebra $\mathcal{M/J}$ is finite-dimensional. Assume the contrary and as in \cite[Theorem 2]{Ringrose} choose a sequence $y_j\in\mathcal{M}_+$ such that $$\|y_j\|_\mathcal{M}\leq 1, y_j^2\notin\mathcal{J} \mbox{ and } y_jy_i=0 \mbox{ for } j\neq i.$$ Since $y_j^2\notin\mathcal{J}$ the mapping $u\mapsto\delta(y_j^2u),u\in\mathcal{M},$ is unbounded, therefore there exist $u_j\in\mathcal{M}$ such that
$$\|u_j\|_\mathcal{M}\leq 2^{-j}\mbox{ and } \|\delta(y_j^2u_j)\|_X\geq C_X(\|\delta(y_j)\|_X+j).$$ By setting $z=\sum_{j=1}^\infty y_ju_j\in\mathcal{M}$ we have $\|z\|_\mathcal{M}\leq 1$ and $y_jz=y_j^2u_j$.

The inequality $\|x\|_X\leq C_X\|x-y\|_X+C_X\|y\|_X, x,y\in X,$ implies that
\begin{gather*}
\begin{split}
\|y_j\delta(z)\|_X&=\|\delta(y_jz)-\delta(y_j)z\|_X\geq
C_X^{-1}\|\delta(y_j^2u_j)\|_X-\|\delta(y_j)z\|_X
\\
&\geq \|\delta(y_j)\|_X+j-\|\delta(y_j)\|_X\|z\|_\mathcal{M}\geq j.
\end{split}
\end{gather*}

Now, using the inequality $\|y_j\|_\mathcal{M}\leq 1$ we obtain that
$$1\leq j^{-1} \|y_j\delta(z)\|_X\leq j^{-1}\|y_j\|_\mathcal{M}\|\delta(z)\|_X \rightarrow 0 \mbox{ as } j\rightarrow\infty,$$
that is a contradiction, thus the quotient algebra $\mathcal{M/J}$ is finite-dimensional.

Therefore, $\mathcal{M}$ is a direct sum $Y\oplus\mathcal{J}$, where $Y$ is a finite-dimensional subspace in $\mathcal{M}$ and $\mathcal{J}$ is a closed ideal in $(\mathcal{M},\|\cdot\|_\mathcal{M})$. Since the restrictions $\delta|_\mathcal{J}$ and $\delta|_Y$ are continuous it follows that the derivation $\delta:(\mathcal{M},\|\cdot\|_\mathcal{M})\to(X,\|\cdot\|_X)$ is continuous too.
\end{proof}

\section{Description of derivations with values in quasi-normed $\mathcal{M}$-bimodules of locally measurable operators}

In this section  we establish the main result of the present paper, which give description of all derivations acting on a von Neumann algebra $\mathcal{M}$ with values in a quasi-normed $\mathcal{M}$-bimodule $\mathcal{E}$ of locally measurable operators affiliated with $\mathcal{M}$.

Let $\mathcal{M}$ be an arbitrary von Neumann algebra.
A linear subspace
$\mathcal{E}$ of $LS(\mathcal{M})$, is called an
$\mathcal{M}$-bimodule of locally measurable operators if $axb\in
\mathcal{E}$ whenever $x\in \mathcal{E}$ and $a,b\in \mathcal{M}$. It is clear that $\mathcal{E}$ is a bimodule over the $C^*$-algebra $\mathcal{M}$ in the sense of section 3.
If $\mathcal{E}$ is a $\mathcal{M}$-bimodule of locally measurable
operators, $x\in\mathcal{E}$ and $x=v|x|$ is the polar
decomposition of operator $x$ then
$|x|=v^*x\in\mathcal{\mathcal{E}}$ and $x^*=|x|v^*\in\mathcal{E}$.
In addition, by Proposition \ref{mchext} every $\mathcal{M}$-bimodule satisfies the following condition
\begin{equation}
\label{e_solid}\text{if}\ |x|\leq |y|,\ y\in\mathcal{E},\ x\in
LS(\mathcal{M})\ \text{then}\ x\in\mathcal{E}.
\end{equation}

If an $\mathcal{M}$-bimodule of locally measurable operators $\mathcal{E}$ is equipped with a quasi-norm
$\|\cdot \|_{\mathcal{E}}$, satisfying the inequality
\eqref{ChIVeq21},
then $(\mathcal{E}, \|\cdot \|_{\mathcal{E}})$ is called a \emph{quasi-normed $\mathcal{M}$-bimodule of locally measurable operators}.

Examples of
quasi-normed $\mathcal{M}$-bimodules of locally measurable operators, which are not  normed $\mathcal{M}$-bimodules, are  the noncommutative $L_p$-space $L_p(\mathcal{M},\tau)$ associated with a faithful normal semifinite trace $\tau$ for $p\in(0,1)$.

It is easy to see that for the quasi-norm $\|\cdot\|_{\mathcal{E}}$ on a quasi-normed $\mathcal{M}$-bimodule of locally measurable operators $\mathcal{E}$ the following properties hold:
\begin{equation}
\label{e1}
\||x|\|_\mathcal{E}=\|x^*\|_\mathcal{E}=\|x\|_\mathcal{E}\ \text{for any}\ x\in\mathcal{E};
\end{equation}
\begin{equation}
\label{e2}
\|y\|_\mathcal{E}\leq\|x\|_\mathcal{E}\ \text{for any}\ x,y\in\mathcal{E},\ 0\leq y\leq x \mbox{ (see Proposition \ref{mchext})}.
\end{equation}

Recall that two projections $e,f\in\mathcal{P}(\mathcal{M})$ are called equivalent (notation: $e\sim f$) if there exists a partial isometry  $u\in\mathcal{M}$ such that $u^*u=e$ and $uu^*=f$.
For projections $e,f\in\mathcal{P}(\mathcal{M})$ notation $e\preceq f$ means that there exists a projection $q\in\mathcal{P}(\mathcal{M})$ such that
$e\sim q\leq f$.

\begin{proposition}
\label{pe4}
Let  $(\mathcal{E}, \|\cdot \|_{\mathcal{E}})$  be a quasi-normed $\mathcal{M}$-bimodule of locally measurable operators, $p,p_k\subset \mathcal{P}(\mathcal{M})\cap\mathcal{E}, k=1,2,\dots, q\in\mathcal{P}(\mathcal{M}), q\preceq p$. Then $q \in\mathcal{E},\sup\limits_{1\leq k\leq n} p_k\in\mathcal{E}$ and
\begin{equation*}
\|q\|_\mathcal{E}\leq\|p\|_\mathcal{E}, \quad\quad
\bigl\|\sup\limits_{1\leq k\leq n} p_k\bigl\|_\mathcal{E}\leq\sum_{k=1}^n C_\mathcal{E}^k\|p_k\|_\mathcal{E}.
\end{equation*}
\end{proposition}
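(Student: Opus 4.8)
The plan is to establish the two assertions in turn, deducing the estimate for the supremum from the one for $q$ together with Kaplansky's parallelogram law. For the first assertion I would unwind the relation $q\preceq p$: by definition there are a projection $q'\in\mathcal{P}(\mathcal{M})$ and a partial isometry $u\in\mathcal{M}$ with $u^*u=q$ and $uu^*=q'\le p$. Since $q'$ is the final (left) support of $u$ we have $q'u=u$, and $q'\le p$ gives $pq'=q'$, so $pu=p(q'u)=(pq')u=q'u=u$ and therefore $q=u^*u=u^*pu$. As $\mathcal{E}$ is an $\mathcal{M}$-bimodule and $p\in\mathcal{E}$, this already shows $q\in\mathcal{E}$, and the quasi-norm inequality \eqref{ChIVeq21} with $\|u\|_{\mathcal{M}}=\|u^*\|_{\mathcal{M}}\le 1$ yields $\|q\|_\mathcal{E}=\|u^*pu\|_\mathcal{E}\le\|u^*\|_{\mathcal{M}}\|p\|_\mathcal{E}\|u\|_{\mathcal{M}}\le\|p\|_\mathcal{E}$.

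For the supremum, set $s_k=\sup_{1\le j\le k}p_j$ and write $s_n$ as the orthogonal sum of its increments $e_1=p_1$ and $e_k=s_k-s_{k-1}$ for $2\le k\le n$, which are genuine projections since $s_{k-1}\le s_k$. Applying the parallelogram law to $s_k=s_{k-1}\vee p_k$ gives $e_k=(s_{k-1}\vee p_k)-s_{k-1}\sim p_k-(p_k\wedge s_{k-1})\le p_k$, that is $e_k\preceq p_k$; hence, by the first assertion, each $e_k\in\mathcal{E}$ with $\|e_k\|_\mathcal{E}\le\|p_k\|_\mathcal{E}$. Since $\mathcal{E}$ is a linear subspace, $s_n=\sum_{k=1}^n e_k\in\mathcal{E}$, which settles the membership claim.

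To bound $\|s_n\|_\mathcal{E}$ I would iterate the quasi-triangle inequality $(iii)$, peeling off one increment at a time: with $R_j:=\bigl\|\sum_{k=j}^n e_k\bigr\|_\mathcal{E}$ one has $R_j\le C_\mathcal{E}(\|e_j\|_\mathcal{E}+R_{j+1})$ and $R_n=\|e_n\|_\mathcal{E}$, and unrolling this recursion produces $\|s_n\|_\mathcal{E}=R_1\le\sum_{k=1}^{n-1}C_\mathcal{E}^k\|e_k\|_\mathcal{E}+C_\mathcal{E}^{n-1}\|e_n\|_\mathcal{E}$. Substituting $\|e_k\|_\mathcal{E}\le\|p_k\|_\mathcal{E}$ and using $C_\mathcal{E}^{n-1}\le C_\mathcal{E}^n$ (valid because $C_\mathcal{E}\ge 1$) to absorb the last term then delivers the stated bound $\|s_n\|_\mathcal{E}\le\sum_{k=1}^n C_\mathcal{E}^k\|p_k\|_\mathcal{E}$.

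The structural heart of the argument is the parallelogram law, which converts the lattice supremum into an orthogonal sum whose pieces are dominated, in the $\preceq$ order, by the original projections, so that the scalar estimate reduces to the already-proved first assertion. The only delicate point is the bookkeeping of the powers of $C_\mathcal{E}$ in the iteration: because there is no genuine triangle inequality available, each peeling step inserts a factor $C_\mathcal{E}$, and one must verify that the exponents so generated never exceed those appearing in the claimed estimate — which is exactly what the final inequality $C_\mathcal{E}^{n-1}\le C_\mathcal{E}^n$ ensures.
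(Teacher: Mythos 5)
Your proof is correct. Both your argument and the paper's rest on the same two pillars, namely the bimodule inequality \eqref{ChIVeq21} applied to a partial isometry implementing the subequivalence, and Kaplansky's parallelogram law $(e\vee f)-f\sim e-(e\wedge f)$, but you organize the second half differently. The paper first proves the case $n=2$ (estimating $p_1\vee p_2-p_1$ by $p_2$) and then inducts, at each step applying the parallelogram law to the pair consisting of $p_1$ and the join of the \emph{remaining} projections, so the lattice manipulation and the quasi-norm estimate are interleaved in the recursion. You instead separate the two ingredients: you first convert the whole join into the telescoping orthogonal sum $\sup_{1\leq k\leq n}p_k=\sum_{k=1}^n e_k$ with each increment $e_k=s_k-s_{k-1}$ satisfying $e_k\preceq p_k$, so that membership in $\mathcal{E}$ and the bound $\|e_k\|_\mathcal{E}\leq\|p_k\|_\mathcal{E}$ follow from the already proved first assertion, and only afterwards iterate the quasi-triangle inequality on the sum. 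This yields a slightly cleaner structural statement (an exact orthogonal decomposition dominated termwise in the $\preceq$ order), and both routes in fact give the same pre-relaxation bound $\sum_{k=1}^{n-1}C_\mathcal{E}^k\|p_k\|_\mathcal{E}+C_\mathcal{E}^{n-1}\|p_n\|_\mathcal{E}$ before using $C_\mathcal{E}\geq 1$. A further small simplification on your side: writing $q=u^*pu$ (using $pu=u$) makes the first assertion a pure bimodule computation from \eqref{ChIVeq21}, whereas the paper writes $q=u^*eu$ with $e\leq p$ and therefore also invokes the solidity properties \eqref{e_solid} and \eqref{e2} to handle the intermediate projection $e$.
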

\begin{proof}
If $q\sim e\leq p$ and $u$ is a partial isometry from $\mathcal{M}$ such that $u^*u=q, uu^*=e$, then $q=u^*eu \in\mathcal{E}$ and $\|q\|_\mathcal{E}\leq\|u^*\|_\mathcal{M}\|u\|_\mathcal{M}\|e\|_\mathcal{E}\leq\|p\|_\mathcal{E}$.

Since $p_1\vee
p_2-p_1\sim p_2-p_1\wedge p_2\leq p_2$ we have that $p_1\vee p_2-p_1\in\mathcal{E}$
and $\|p_1\vee p_2-p_1\|_{\mathcal{E}}\leq\|p_2\|_{\mathcal{E}}$. Hence, $p_1\vee p_2=(p_1\vee p_2-p_1)+p_1\in\mathcal{E}$
and
$$\|p_1\vee p_2\|_{\mathcal{E}}=\|p_1\vee p_2-p_1+p_1\|_\mathcal{E}\leqslant C_\mathcal{E}(\|p_1\vee
p_2-p_1\|_{\mathcal{E}}+\|p_1\|_{\mathcal{E}})\leq C_\mathcal{E}\|p_1\|_\mathcal{E}+C_\mathcal{E}\|p_2\|_\mathcal{E},$$
in particular, $\|p_1\vee p_2\|_{\mathcal{E}}\leq C_\mathcal{E}\|p_1\|_\mathcal{E}+C_\mathcal{E}^2\|p_2\|_\mathcal{E}$ since $C_\mathcal{E}\geq 1$.

Further, proceed by the induction we have that $\sup\limits_{1\leq k\leq n}  p_k\in\mathcal{E}$ and
\begin{gather*}
\bigl\|\sup\limits_{1\leq k\leq n}  p_k\bigl\|_\mathcal{E}=\bigl\|p_1\vee(\sup\limits_{2\leq k\leq n}  p_k)\bigl\|_\mathcal{E}\leq C_\mathcal{E}\|p_1\|_\mathcal{E}+C_\mathcal{E}\bigl\|\sup\limits_{1\leq k\leq n-1} p_{k+1}\bigl\|_\mathcal{E}
\\
\leq C_\mathcal{E}\|p_1\|_\mathcal{E}+C_\mathcal{E}\sum_{k=1}^{n-1} C_\mathcal{E}^k\|p_{k+1}\|_\mathcal{E}=\sum_{k=1}^n C_\mathcal{E}^k\|p_k\|_\mathcal{E}.
\end{gather*}
\end{proof}

Let $\delta:\mathcal{M}\to LS(\mathcal{M})$ be an arbitrary derivation, that is $\delta$ is a linear mapping such that $\delta(ab)=\delta(a)b+a\delta(b)$ for all $a,b\in\mathcal{M}$. In \cite[Lemma 3.1]{BCS_2}  it is proved that $\delta(z)=0$  and $\delta(zx)=z\delta(x)$  for all $x\in\mathcal{M}, z\in\mathcal{P(Z(M))}$. In particular, $\delta(z\mathcal{M})\subset zLS(\mathcal{M})$ and the restriction $\delta^{(z)}$ of the derivation $\delta$ to $z\mathcal{M}$ is a derivation on $z\mathcal{M}$ with values in $zLS(\mathcal{M})=LS(z\mathcal{M})$.

Let $\delta$ be a derivation from $\mathcal{M}$ with values in a $\mathcal{M}$-bimodule $\mathcal{E}$ of locally measurable operators. Let us define a mapping
$$\delta^*: \mathcal{M}\rightarrow
\mathcal{E},$$
by setting $\delta^*(x)=(\delta(x^*))^*$, $x\in
\mathcal{M}$. A direct verification shows that $\delta^*$ is also a
derivation from $\mathcal{M}$ with values in $\mathcal{E}$.

A derivation $\delta$ is said to be
\emph{self-adjoint}, if $\delta=\delta^*$. Every
derivation $\delta$ from $\mathcal{M}$ with values in a  $\mathcal{M}$-bimodule $\mathcal{E}$  of locally measurable operators can be represented in the form
$\delta= Re(\delta)+ i Im(\delta)$, where
$Re(\delta)=(\delta+\delta^*)/2,\ Im(\delta)=(\delta-\delta^*)/2i$
are self-adjoint derivations from $\mathcal{M}$ with values in $\mathcal{E}$.

For the proof of the assertion that every derivation $\delta:\mathcal{M}\to\mathcal{E}$ is inner we need the following significant theorem establishing that every $t(\mathcal{M})$-continuous derivation acting on the $*$-algebra $LS(\mathcal{M})$ is inner.
\begin{theorem}\cite[Theorem 4.1]{BCS_3}
\label{t1}
Every derivation on the algebra $LS(\mathcal{M})$ continuous with respect to the topology $t(\mathcal{M})$ is inner derivation.
\end{theorem}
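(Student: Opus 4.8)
The plan is to build a single operator $a\in LS(\mathcal{M})$ with $\delta(x)=[a,x]$ for all $x$, the whole point being that the values are allowed to lie in the \emph{larger} algebra $LS(\mathcal{M})$, so $a$ need not be bounded.

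\textbf{Reduction to $\mathcal{M}$ and to the self-adjoint case.} Since $\mathcal{M}$ is $t(\mathcal{M})$-dense in $LS(\mathcal{M})$ (Proposition \ref{p2_2}) and $(LS(\mathcal{M}),t(\mathcal{M}))$ is a topological $*$-algebra, multiplication is jointly $t(\mathcal{M})$-continuous, so for any $a\in LS(\mathcal{M})$ the inner derivation $\delta_a(x)=ax-xa$ is itself $t(\mathcal{M})$-continuous. Hence if $\delta$ and $\delta_a$ agree on the dense subalgebra $\mathcal{M}$, they agree on all of $LS(\mathcal{M})$; it therefore suffices to implement the restriction $\delta|_{\mathcal{M}}\colon\mathcal{M}\to LS(\mathcal{M})$. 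Writing $\delta=\operatorname{Re}(\delta)+i\operatorname{Im}(\delta)$ reduces matters to a self-adjoint derivation, for which I will seek a skew-adjoint implementing element.

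\textbf{Central reduction.} Using $\delta(z)=0$ and $\delta(zx)=z\delta(x)$ for central projections $z$, I pass to the summands of the type decomposition, first splitting off $\mathbf{1}=z_{\mathrm{fin}}+z_{\infty}$ into the finite and properly infinite parts, and inside $z_{\mathrm{fin}}$ further separating the finite type $\mathrm{I}_n$ pieces from the type $\mathrm{II}_1$ piece. On each central summand $z\mathcal{M}$ the map $\delta^{(z)}$ is again a $t(z\mathcal{M})$-continuous derivation on $LS(z\mathcal{M})=zLS(\mathcal{M})$, and the task becomes to produce $a_z\in LS(z\mathcal{M})$ implementing $\delta^{(z)}$. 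On the properly infinite summand $z_{\infty}\mathcal{M}$ one invokes the known continuity-free fact that every derivation of $LS$ of a properly infinite algebra is spatial, yielding $a_{z_\infty}$ directly. On a type $\mathrm{I}_n$ summand $z\mathcal{M}\cong M_n(\mathcal{A})$ with $\mathcal{A}$ abelian, I decompose $\delta^{(z)}$ through the matrix units into an inner part plus a derivation acting entrywise on the center $LS(\mathcal{A})=L^0$; here $t(\mathcal{M})$-continuity is decisive, since every $t$-continuous derivation of a commutative algebra $L^0$ vanishes, so the central part dies and $\delta^{(z)}$ is inner.

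\textbf{The type $\mathrm{II}_1$ part, gluing, and the main obstacle.} It remains to treat the continuous finite summand $z_{\mathrm{II}_1}\mathcal{M}$, where $LS=S(\mathcal{M},\tau)$ and there is no matrix-unit shortcut; the plan is to combine the trace structure with the density of $\mathcal{M}$ and, once more, to use continuity to annihilate the derivation's contribution on the (commutative) center before implementing the remainder. Finally I assemble $a:=\sum_z z\,a_z$ over a maximal orthogonal family of central projections with $\sup_z z=\mathbf{1}$; because each $a_z$ is affiliated with $z\mathcal{M}$ and the $z$ are orthogonal and central, this central sum defines a genuine element of $LS(\mathcal{M})$, and from $z\bigl(\delta(x)-[a,x]\bigr)=\delta^{(z)}(zx)-[a_z,zx]=0$ for every $z$ one gets $\delta=\delta_a$ on $\mathcal{M}$. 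I expect the genuine difficulty to concentrate in the type $\mathrm{II}_1$ case: unlike the type $\mathrm{I}_n$ situation there is no purely algebraic separation of an inner part, so one must show directly that $t(\mathcal{M})$-continuity forces the derivation to be spatial over a diffuse finite trace, and this is exactly the point where the hypothesis cannot be dropped.
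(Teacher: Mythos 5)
First, a point of reference: the paper you were given does not prove this statement at all. Theorem \ref{t1} is imported verbatim from \cite[Theorem 4.1]{BCS_3} (a separate 31-page paper) and is used here as a black box, so your proposal has to stand entirely on its own merits rather than be compared line-by-line with anything in the present text.

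On its own merits, the proposal has a genuine gap, and you have located it yourself: the type $\mathrm{II}_1$ summand. Everything you write about that case is a statement of intent (``combine the trace structure with the density of $\mathcal{M}$ \dots\ use continuity to annihilate the derivation's contribution on the center before implementing the remainder''), not an argument. On a type $\mathrm{II}_1$ algebra there are no matrix units, so the derivation does not come pre-split into an inner part plus a central part --- which is exactly why your type $\mathrm{I}_n$ trick (kill the central derivation on $L^0$ by $t$-continuity) has nothing to act on; the implementing element must be manufactured from scratch, and this is precisely where essentially all of the technical work of \cite{BCS_3} is concentrated. A secondary but real problem is the properly infinite summand: you invoke a ``known continuity-free fact that every derivation of $LS$ of a properly infinite algebra is spatial.'' For type $\mathrm{I}_\infty$ and type $\mathrm{III}$ such results are indeed in the literature \cite{AAK,AK}, but for type $\mathrm{II}_\infty$ no continuity-free innerness theorem predates \cite{BCS_3}; what was known beforehand (\cite[Theorem 3.3]{BCS_2}) is automatic $t(\mathcal{M})$-continuity, not innerness, so on that summand you are quietly citing (a case of) the very theorem you are trying to prove. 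The parts of your outline that are sound --- the reduction to $\mathcal{M}$ via Proposition \ref{p2_2} and joint continuity of multiplication, the central gluing $a=\sum_z z\,a_z$ (legitimate, since $LS(\mathcal{M})$ is closed under central sums over orthogonal central projections), and the type $\mathrm{I}_n$ matrix-unit argument --- are the standard reductions; the content of the theorem lives exactly where your proposal stops.
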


For application of the Theorem \ref{t1} in our case, when $\delta$ is a derivation acting on $\mathcal{M}$ with values in quasi-normed $\mathcal{M}$-bimodule $(\mathcal{E},\|\cdot\|_\mathcal{E})$ of locally measurable operators, we need $t(\mathcal{M})$-continuity of $\delta$. In the following Proposition \ref{p4} using Theorem \ref{t_Ring} we prove this property of derivation $\delta$.

Let $\tau$ be a faithful normal finite trace $\tau$
on the von Neumann algebra $\mathcal{M}$. In this case, the algebra $\mathcal{M}$ is finite. Moreover,
$LS(\mathcal{M})=S(\mathcal{M})=S(\mathcal{M},\tau)$,
$t(\mathcal{M})=t_\tau$ and $(LS(\mathcal{M}),t(\mathcal{M}))$ is
an $F$-space \cite[\textsection\textsection 3.4,3.5]{MCh}.

The following lemma shows that quasi-normed $\mathcal{M}$-bimodule $(\mathcal{E},\|\cdot\|_\mathcal{E})$ of locally measurable operators  is continuously embedded into $(LS(\mathcal{M}),t(\mathcal{M}))$.
\begin{lemma}
\label{l_e2}
 If $\{a_n\}_{n=1}^\infty\subset \mathcal{E}$ and $\|a_n\|_{\mathcal{E}}\rightarrow 0$,
then $a_n\xrightarrow{t(\mathcal{M})}0$.
\end{lemma}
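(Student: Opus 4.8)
The plan is to peel off a norm-bounded part by spectral truncation and reduce the whole statement to the behaviour of spectral projections, whose $t(\mathcal{M})$-smallness is governed by the dimension function. First I would reduce to the case $a_n\geq 0$: writing $a_n=v_n|a_n|$ with $\|v_n\|_{\mathcal{M}}\leq 1$, property \eqref{e1} gives $\||a_n|\|_{\mathcal{E}}=\|a_n\|_{\mathcal{E}}\to 0$, and since left multiplication by a contraction maps each $V(B,\varepsilon,\delta)$ into itself, it suffices to prove $|a_n|\xrightarrow{t(\mathcal{M})}0$. So assume $a_n\geq 0$ and fix a basic neighbourhood $V(B,\varepsilon,\delta)$ with $\varepsilon<1$. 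Put $e_n=\mathbf{1}-E_\varepsilon(a_n)$, the spectral projection of $a_n$ for $(\varepsilon,\infty)$. Then $\varepsilon e_n\leq a_n$, so by \eqref{e_solid} and \eqref{e2} we get $e_n\in\mathcal{E}$ with $\|e_n\|_{\mathcal{E}}\leq\varepsilon^{-1}\|a_n\|_{\mathcal{E}}\to 0$.

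Next I would split $a_n=a_nE_\varepsilon(a_n)+a_ne_n$. The first summand lies in $\mathcal{M}$ with $\|a_nE_\varepsilon(a_n)\|_{\mathcal{M}}\leq\varepsilon$, and any $b\in\mathcal{M}$ with $\|b\|_{\mathcal{M}}\leq\varepsilon$ belongs to $V(B,\varepsilon,\delta)$ for every $B,\delta$ (take $p=z=\mathbf{1}$ in the definition). For the second summand $c_n=a_ne_n$ one has $c_n(\mathbf{1}-e_n)=0$, so choosing $p=\mathbf{1}-e_n$ gives $c_np=0$ with $\|c_np\|_{\mathcal{M}}=0$; thus $c_n\in V(B,\varepsilon,\delta)$ as soon as there is a central projection $z$ with $\varphi(z^\bot)\in W(B,\varepsilon,\delta)$ and $\mathcal{D}(ze_n)\leq\varepsilon\varphi(z)$. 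Since $t(\mathcal{M})$ is a vector topology and the sets $V(B,\varepsilon,\delta)$ form a base at $0$, controlling both summands yields $a_n\to 0$. Hence everything reduces to the single claim: if $e_n\in\mathcal{P}(\mathcal{M})\cap\mathcal{E}$ and $\|e_n\|_{\mathcal{E}}\to 0$, then $e_n\xrightarrow{t(\mathcal{M})}0$, which by the membership criterion just described amounts to $\mathcal{D}(e_n)\to 0$ in the local measure topology of the centre.

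The heart of the matter, and the step I expect to be the main obstacle, is precisely this comparison between the quasi-norm and the dimension function: $\|e_n\|_{\mathcal{E}}\to 0$ must force $\mathcal{D}(e_n)\to 0$ in measure. I would argue by contradiction. If $\mathcal{D}(e_n)\not\to 0$, then after passing to a subsequence there are a set $B_0$ of finite measure and $\varepsilon_0,\delta_0>0$ together with central projections $z_n$ (supported on $B_0$, of measure at least $\delta_0$) such that $\mathcal{D}(z_ne_n)\geq\varepsilon_0\varphi(z_n)$; replacing $e_n$ by $z_ne_n\leq e_n$ only decreases the quasi-norm by Proposition \ref{pe4}. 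Using the Aoki--Rolewicz $p$-norm equivalent to $\|\cdot\|_{\mathcal{E}}$ I would thin the subsequence so that $\sum_n\|e_n\|_{\mathcal{E}}$ converges in the $p$-norm sense, form the increasing partial sums $s_K=\sum_{k\leq K}e_{n_k}$, and pass to the limit $g=\sum_k e_{n_k}$. The quasi-norms of the $s_K$ stay bounded by Proposition \ref{pe4}, whereas on the persistent central set their dimensions $\mathcal{D}(s_K)$ grow without bound; realising $g$ as an element of $\mathcal{E}\subseteq LS(\mathcal{M})$ then contradicts local measurability of $g$ (finiteness of its dimension almost everywhere). Converting the norm-boundedness of the $s_K$ into a genuine element $g$ is the delicate point, since it needs a completeness (Fatou-type) property of $(\mathcal{E},\|\cdot\|_{\mathcal{E}})$; I therefore expect the argument to invoke completeness of $\mathcal{E}$, available in the motivating examples $L_p(\mathcal{M},\tau)$, or an equivalent order-continuity property, and this is where the proof is genuinely non-trivial.
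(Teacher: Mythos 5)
Your reduction to the projection case is sound, and it in fact parallels the opening of the paper's own proof: there too one passes to the spectral projections $p_n=\mathbf{1}-E_\varepsilon(|a_n|)$ and bounds them via \eqref{e2} by $\|p_n\|_{\mathcal{E}}\leq\varepsilon^{-1}\|a_n\|_{\mathcal{E}}$. The genuine gap is exactly where you flag it: your proof of the key claim requires realising the infinite sum $g=\sum_k e_{n_k}$ as an element of $\mathcal{E}$, which needs completeness of $(\mathcal{E},\|\cdot\|_{\mathcal{E}})$ (or a Fatou-type order-continuity property). Neither is assumed anywhere in the paper; on the contrary, the entire point of Theorem \ref{t_Ring} and of the main theorem is to treat quasi-normed bimodules \emph{without} completeness, so invoking it makes the lemma fail in precisely the generality it is needed. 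Moreover, even granting completeness, your contradiction does not close. First, the quasi-norm limit $g$ of the partial sums $s_K$ need not dominate the $s_K$ in the operator order unless one already knows that $\|\cdot\|_{\mathcal{E}}$-convergence is compatible with the measure topology --- which is the very statement being proved, so the argument is circular at that point. Second, the lemma lives in the setting of a faithful normal \emph{finite} trace; there every closed densely defined operator affiliated with $\mathcal{M}$ is (locally) measurable and positive operators of infinite trace exist in $S(\mathcal{M},\tau)$, so ``non-local-measurability of $g$'' or ``infinite dimension'' is not an available contradiction.

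What your proposal is missing is the paper's device for avoiding any limit taken inside $\mathcal{E}$: work entirely in the projection lattice $\mathcal{P}(\mathcal{M})$, where suprema always exist, and control their quasi-norms by Proposition \ref{pe4}. Concretely, negating $t_\tau$-convergence gives, along a subsequence, $\tau(p_n)>\delta$ while $\|p_n\|_{\mathcal{E}}<(2C_\mathcal{E})^{-n}\varepsilon$; normality of the finite trace gives $\tau(q)\geq\delta$ for $q=\inf_n\sup_{m\geq n}p_m$. Proposition \ref{pe4} yields $\bigl\|\bigvee_{k=n}^{n+s}p_k\bigr\|_{\mathcal{E}}\leq\sum_{k=n}^{n+s}C_\mathcal{E}^k\|p_k\|_{\mathcal{E}}<2^{-(n-1)}$, the geometric decay $(2C_\mathcal{E})^{-k}$ exactly compensating the constants $C_\mathcal{E}^k$. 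Since $\tau(\mathbf{1})<\infty$, the lattice $\mathcal{P}(\mathcal{M})$ is a continuous geometry \cite{Kap}, so the infinite suprema $(\bigvee_{m\geq n}p_m)\wedge q=q$ are approximated in trace by finite ones $r_{n,s_n}$, and the increasing projections $e_n=\inf_{m\geq n}r_{m,s_m}\uparrow q$ inherit the bound $\|e_n\|_{\mathcal{E}}<2^{-(n-1)}$. Monotonicity of the quasi-norm \eqref{e2} applied along the increasing sequence forces $\|e_n\|_{\mathcal{E}}=0$, i.e.\ $e_n=0$ for every $n$, whence $q=0$, contradicting $\tau(q)\geq\delta$. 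This lattice-theoretic substitute for your summation step is what makes the lemma true without any completeness hypothesis on $\mathcal{E}$.
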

\begin{proof}
Since the trace $\tau$ is finite, we have that $t(\mathcal{M})=t_\tau$, and therefore it is sufficient to show that $a_n\xrightarrow{t_\tau} 0$. Suppose the contrary. Passing to a subsequence, if necessary, we may choose $\varepsilon,\delta>0$ such that
\begin{eqnarray}
 \label{pr(1)} \tau(\mathbf{1}-E_\varepsilon(|a_n|))>\delta;
\\
 \label{pr(2)} \|a_n\|_\mathcal{E}<(2C_\mathcal{E})^{-n}\varepsilon
\end{eqnarray}
for all $n\in\mathbb{N}$.

Set $$p_n=\mathbf{1}-E_\varepsilon(|a_n|), q_n=\sup_{m\geq n}p_m, q=\inf_{n\geq 1}q_n.$$ Since $\tau$ is a normal finite trace and $\tau(p_n)>\delta$ (see \eqref{pr(1)}), we have that
\begin{equation}\label{(3)}
\tau(q)\geq\delta.
\end{equation}

The inequalities \eqref{pr(2)} and $0\leq\varepsilon p_n\leq|a_n|$ imply that
\begin{equation}\label{(4)}
\|p_n\|_\mathcal{E}\leq\varepsilon^{-1}\|a_n\|_\mathcal{E}<(2C_\mathcal{E})^{-n}.
\end{equation}

Set $$r_{n,s}=\bigl(\bigvee_{m=n}^{n+s}p_m\bigl)\wedge q.$$ It is clear that $r_{n,s}\leq r_{n,s+1}$. Since $\tau(\mathbf{1})<\infty$ the set $\mathcal{P(M)}$ is an orthocomplemented complete modular lattice,  and therefore $\mathcal{P(M)}$ is a continuous geometry \cite{Kap}. In particular,
$$\sup_{s\geq 1}r_{n,s}=\bigl(\bigvee_{m=n}^\infty p_m\bigl)\wedge q=q_n\wedge q=q.$$
Consequently, for all $n\in\mathbb{N}$ there exists an integer $s_n$ such that $\tau(q-r_{n,s_n})<2^{-n}$. Set $e_n=\inf_{m\geq n}r_{m,s_m}$. The sequence of projections $\{e_n\}$ is increasing, moreover, $e_n\leq q$ and
$$\tau(q-e_n)=\tau(q-\inf_{m\geq n}r_{m,s_m})=\tau(\sup_{m\geq n}(q-r_{m,s_m}))<2^{-(n-1)}.$$
Therefore, $e_n\uparrow q$.

By Proposition \ref{pe4} we have
$$\|e_n\|_\mathcal{E}\leq\|r_{n,s_n}\|_\mathcal{E}\leq\bigl\|\bigvee_{k=n}^{n+s_n}p_k\bigl\|_\mathcal{E}\leq \sum_{k=n}^{n+s}C_\mathcal{E}^k\|p_k\|_\mathcal{E}\stackrel{\eqref{(4)}}{<}\sum_{k=n}^{n+s}2^{-k}<2^{-(n-1)}.$$
Since the sequence $e_n$ is increasing, the last inequality implies that $e_n=0$ for all $n\in\mathbb{N}$. Using the convergence  $e_n\uparrow q$ we obtain $q=0$, that contradicts \eqref{(3)}.
\end{proof}

For the proof of the following Proposition \ref{p4} on $t(\mathcal{M})$-continuity of a derivation $\delta\colon\mathcal{M}\to\mathcal{E}$ we need the following lemma from \cite{BCS_3}.

\begin{lemma}\cite[Lemma 6.8]{BCS_3}
\label{l_e4} If $\mathcal{M}$ is a von Neumann algebra with a faithful normal finite trace $\tau$,
$\{a_n\}_{n=1}^\infty\subset LS(\mathcal{M})$ and $a_n\xrightarrow{t(\mathcal{M})}0$, then there exists a subsequence $\{a_{n_k}\}_{k=1}^\infty$ such that $a_{n_k}=b_k+c_k$, where $b_k\in\mathcal{M},\ c_k\in LS(\mathcal{M}),\ k\in\mathbb{N},\ \|b_k\|_{\mathcal{M}}\rightarrow 0$ and $s(|c_k|)\xrightarrow{t(\mathcal{M})}0$.
\end{lemma}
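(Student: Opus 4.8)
The plan is to build the required decomposition directly from the spectral family of each $|a_{n_k}|$, splitting off a bounded piece of small $C^*$-norm and leaving a residual piece whose support has small trace; a diagonal (subsequence) argument reconciles these two competing demands.

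First I would record the standard reformulation of convergence in the measure topology. Since $\tau$ is finite we have $t(\mathcal{M})=t_\tau$ and $LS(\mathcal{M})=S(\mathcal{M},\tau)$, and the hypothesis $a_n\xrightarrow{t(\mathcal{M})}0$ is equivalent to $\tau(\mathbf{1}-E_\varepsilon(|a_n|))\to 0$ as $n\to\infty$ for every fixed $\varepsilon>0$; this is exactly the criterion already exploited, in its contrapositive form, in the proof of Lemma \ref{l_e2}. Next I would extract the subsequence: for each $k\in\mathbb{N}$, apply the criterion with $\varepsilon=1/k$, so that $\tau(\mathbf{1}-E_{1/k}(|a_n|))\to 0$ as $n\to\infty$, and choose indices $n_1<n_2<\cdots$ with $\tau(\mathbf{1}-E_{1/k}(|a_{n_k}|))<1/k$. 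Writing $e_k:=E_{1/k}(|a_{n_k}|)\in\mathcal{P}(\mathcal{M})$, I set
$$b_k:=a_{n_k}e_k, \qquad c_k:=a_{n_k}e_k^\perp=a_{n_k}-b_k.$$

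Then I verify the two properties. Using the polar decomposition $a_{n_k}=v_k|a_{n_k}|$ with $v_k\in\mathcal{M}$, the operator $|a_{n_k}|e_k$ is the bounded spectral integral $\int_0^{1/k}\lambda\,dE_\lambda(|a_{n_k}|)$, hence lies in $\mathcal{M}$ with $\||a_{n_k}|e_k\|_{\mathcal{M}}\leq 1/k$; consequently $b_k=v_k(|a_{n_k}|e_k)\in\mathcal{M}$ and $\|b_k\|_{\mathcal{M}}\leq 1/k\to 0$. For $c_k$, since $e_k$ commutes with $|a_{n_k}|$ one computes $c_k^*c_k=e_k^\perp|a_{n_k}|^2e_k^\perp=|a_{n_k}|^2e_k^\perp$, so $|c_k|=|a_{n_k}|e_k^\perp$; as $|a_{n_k}|$ is injective on the range of $e_k^\perp=E_{1/k}(|a_{n_k}|)^\perp$ (its spectrum there lies in $(1/k,\infty)$), this gives $s(|c_k|)=e_k^\perp=\mathbf{1}-E_{1/k}(|a_{n_k}|)$. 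Therefore $\tau(s(|c_k|))<1/k\to 0$, and since for projections the condition $\tau(p_k)\to 0$ forces $p_k\xrightarrow{t(\mathcal{M})}0$ (in each neighbourhood $U(\varepsilon,\delta)$ the complementary projection $p_k^\perp$ serves as witness, with $\|p_k p_k^\perp\|_{\mathcal{M}}=0$ and $\tau((p_k^\perp)^\perp)=\tau(p_k)$), I conclude $s(|c_k|)\xrightarrow{t(\mathcal{M})}0$.

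The spectral-calculus identities above are routine; the only genuinely delicate point is the diagonal selection. One must let the spectral cutoff $1/k$ tend to $0$ while simultaneously pushing the index $n_k$ far enough out along the sequence that the complementary spectral projection still has trace below $1/k$. This is feasible precisely because the convergence criterion provides, for each \emph{fixed} cutoff $\varepsilon$, convergence of the traces $\tau(\mathbf{1}-E_\varepsilon(|a_n|))$ in $n$; fixing the cutoff first and only then choosing $n_k$ is what renders the two requirements compatible.
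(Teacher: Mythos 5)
Your proof is correct. Note that the paper itself offers no argument for this lemma --- it is imported verbatim from \cite[Lemma 6.8]{BCS\_3} --- so there is no in-paper proof to compare against; your spectral-truncation construction ($b_k=a_{n_k}E_{1/k}(|a_{n_k}|)$, $c_k=a_{n_k}E_{1/k}(|a_{n_k}|)^{\perp}$, with the index $n_k$ chosen after fixing the cutoff $1/k$) is the natural and, in all likelihood, essentially the same argument as in the cited reference. One small imprecision: the direction of the measure-convergence criterion you need ($a_n\xrightarrow{t_\tau}0$ implies $\tau(\mathbf{1}-E_\varepsilon(|a_n|))\to 0$ for each fixed $\varepsilon$) is the \emph{converse} of the easy direction used in the proof of Lemma \ref{l_e2}, where the witness $p=E_\varepsilon(|a_n|)$ gives membership in $U(\varepsilon,\delta)$ for free. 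The direction you use requires the comparison $E_\varepsilon^{\perp}(|a|)\preceq p^{\perp}$ whenever $\|ap\|_{\mathcal{M}}<\varepsilon$, which is standard and is implicitly recorded in the paper via the equality of the two expressions for $\mu_t(x)$ in Section 4; citing that equality (or the comparison lemma) would close this small gap cleanly. All remaining steps --- the polar decomposition giving $b_k\in\mathcal{M}$ with $\|b_k\|_{\mathcal{M}}\le 1/k$, the identity $|c_k|=|a_{n_k}|E_{1/k}(|a_{n_k}|)^{\perp}$, the identification $s(|c_k|)=E_{1/k}(|a_{n_k}|)^{\perp}$, and the fact that projections with traces tending to zero converge to zero locally in measure --- are verified correctly.
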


The following proposition is crucial step in the proof that every derivation $\delta:\mathcal{M}\to\mathcal{E}$ is inner.
\begin{proposition}
\label{p4} Let $\mathcal{M}$ be a von Neumann algebra with a faithful normal finite trace $\tau$, let $(\mathcal{E},\|\cdot\|_\mathcal{E})$ be a  quasi-normed $\mathcal{M}$-bimodule of locally measurable operators and let $\delta$ be a derivation on $LS(\mathcal{M})$ such that $\delta(\mathcal{M})\subset \mathcal{E}$. Then the derivation $\delta$ is $t(\mathcal{M})$-continuous.
\end{proposition}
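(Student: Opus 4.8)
The plan is to derive the $t(\mathcal{M})$-continuity of $\delta$ from the Closed Graph Theorem. Since $\tau$ is a faithful normal finite trace, $(LS(\mathcal{M}),t(\mathcal{M}))=(S(\mathcal{M},\tau),t_\tau)$ is an $F$-space, so it suffices to check that the graph of the linear map $\delta$ is closed; by linearity and metrizability this reduces to proving that whenever $a_n\xrightarrow{t(\mathcal{M})}0$ and $\delta(a_n)\xrightarrow{t(\mathcal{M})}y$, one has $y=0$. The decisive advantage of this formulation is that we may \emph{assume} the sequence $\delta(a_n)$ converges, which is precisely what lets us bypass the genuine difficulty described at the end.

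So I would fix such a sequence. Passing to a subsequence (permissible, since we only need to identify the limit $y$) and applying Lemma~\ref{l_e4}, I write $a_{n_k}=b_k+c_k$ with $b_k\in\mathcal{M}$, $\|b_k\|_{\mathcal{M}}\to 0$, and $p_k:=s(|c_k|)$ satisfying $\tau(p_k)\to 0$ and $c_kp_k=c_k$ (as $p_k$ is the right support of $c_k$); note $c_k=a_{n_k}-b_k\xrightarrow{t(\mathcal{M})}0$. By Theorem~\ref{t_Ring} the restriction $\delta|_{\mathcal{M}}$ is bounded, with some norm $K$, so $\|\delta(b_k)\|_{\mathcal{E}}\le K\|b_k\|_{\mathcal{M}}\to 0$, and hence $\delta(b_k)\xrightarrow{t(\mathcal{M})}0$ by Lemma~\ref{l_e2}; consequently $\delta(c_k)=\delta(a_{n_k})-\delta(b_k)\xrightarrow{t(\mathcal{M})}y$. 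The Leibniz rule applied to $c_k=c_kp_k$ then yields the key identity
\[
\delta(c_k)\,p_k^\bot=c_k\,\delta(p_k).
\]
Since $p_k\in\mathcal{M}$ we have $\delta(p_k)\in\mathcal{E}$ with $\|\delta(p_k)\|_{\mathcal{E}}\le K$, so $\{\delta(p_k)\}$ sits in a bounded subset of the (locally bounded) quasi-normed space $\mathcal{E}$ and therefore, by the continuity of the embedding $(\mathcal{E},\|\cdot\|_{\mathcal{E}})\hookrightarrow(LS(\mathcal{M}),t(\mathcal{M}))$ furnished by Lemma~\ref{l_e2}, in a $t(\mathcal{M})$-bounded set. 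As $c_k\xrightarrow{t(\mathcal{M})}0$ and $\{\delta(p_k)\}$ is $t(\mathcal{M})$-bounded, the product $c_k\delta(p_k)$ is $t(\mathcal{M})$-null, so $\delta(c_k)p_k^\bot\to 0$. On the other hand $\tau(p_k)\to 0$ forces $p_k^\bot\xrightarrow{t(\mathcal{M})}\mathbf{1}$, whence, by joint continuity of multiplication in the topological algebra $(LS(\mathcal{M}),t(\mathcal{M}))$ together with $\delta(c_k)\to y$, we get $\delta(c_k)p_k^\bot\to y\cdot\mathbf{1}=y$. Comparing the two limits gives $y=0$, so the graph is closed and $\delta$ is continuous.

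The hard part, and the reason a naive direct argument fails, is that $c_k$ is small in measure and of vanishing support but \emph{need not be bounded}, so $\delta(c_k)$ is not controlled by the norm estimate of Theorem~\ref{t_Ring}; equivalently, the ``diagonal'' part $p_k\,\delta(c_k)$ of $\delta(c_k)$ shows no manifest decay. The closed graph formulation dissolves this obstacle: multiplying the assumed-convergent sequence $\delta(c_k)$ on the right by $p_k^\bot\to\mathbf{1}$ recovers the full limit $y$, while the identity $\delta(c_k)p_k^\bot=c_k\delta(p_k)$ simultaneously exhibits that quantity as null. The one technical ingredient I would still verify is that the product of a $t(\mathcal{M})$-null sequence and a $t(\mathcal{M})$-bounded sequence is $t(\mathcal{M})$-null; this is a standard property of $(S(\mathcal{M},\tau),t_\tau)$, checked directly from the basis $U(\varepsilon,\delta)$ by selecting for the bounded factor $w_k$ a projection $q_k$ with $\tau(q_k^\bot)$ small and $\|w_kq_k\|_{\mathcal{M}}$ bounded, and then trimming $q_k$ by the right support of $r_k^\bot w_k$ (where $r_k$ controls the null factor), using that $l(r_k^\bot w_k)\le r_k^\bot$ has small trace.
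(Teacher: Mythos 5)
Your proof is correct, and its skeleton coincides with the paper's: reduction to the closed graph theorem in the $F$-space $(LS(\mathcal{M}),t(\mathcal{M}))$, the decomposition $a_{n_k}=b_k+c_k$ supplied by Lemma~\ref{l_e4}, and the treatment of the bounded part $b_k$ via Theorem~\ref{t_Ring} combined with Lemma~\ref{l_e2}. The genuine difference is in the singular part: at that point the paper simply declares that ``by verbatim repetition of the second part of the proof of \cite[Lemma 6.9]{BCS_3}'' one gets $\delta(c_n)\xrightarrow{t(\mathcal{M})}0$, i.e.\ it outsources the decisive step to an external argument, whereas you make it self-contained. Your identity $\delta(c_k)p_k^\bot=c_k\delta(p_k)$ (Leibniz applied to $c_k=c_kp_k$, using only the right support) together with the $t(\mathcal{M})$-boundedness of $\{\delta(p_k)\}$ and the convergences $p_k^\bot\to\mathbf{1}$, $\delta(c_k)\to y$ pins down $y=0$; this is the same kind of support-projection manipulation as in the cited lemma (which juggles both left and right supports), but leaner, and it makes Proposition~\ref{p4} independent of the internals of \cite{BCS_3}. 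Your diagnosis of why the closed-graph formulation is essential --- $\delta(c_k)$ is not norm-controlled, so only the \emph{assumed} convergence lets one recover it from $\delta(c_k)p_k^\bot$ --- is exactly the role that hypothesis plays in the paper's cited argument as well. One simplification: the ``technical ingredient'' you propose to verify by hand (null times bounded is null) requires no computation with the neighbourhoods $U(\varepsilon,\delta)$; it holds in any topological algebra with jointly continuous multiplication, since boundedness of $\{w_k\}$ gives $s_0^{-1}w_k\in V$ for all $k$ while $s_0c_k\in U$ eventually, so $c_kw_k=(s_0c_k)(s_0^{-1}w_k)\in UV\subset W$, and the paper records that $(LS(\mathcal{M}),t(\mathcal{M}))$ is a complete topological $*$-algebra. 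A last cosmetic point: Lemma~\ref{l_e4} gives $s(|c_k|)\xrightarrow{t(\mathcal{M})}0$ rather than $\tau(p_k)\to 0$; the two are equivalent for projections under a finite trace, but you only ever use $p_k\to 0$ in $t(\mathcal{M})$, which the lemma hands you directly.
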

\begin{proof}
Since $(LS(\mathcal{M}),t(\mathcal{M}))$ is an $F$-space for the proof of $t(\mathcal{M})$-continuity of the mapping $\delta$  it is sufficient to show that the graph of the linear operator $\delta$ is closed.

Suppose that the graph of the operator $\delta$ is not closed. Then there exist a sequence $\{a_n\}_{n=1}^\infty\subset LS(\mathcal{M})$ and $0\neq b\in LS(\mathcal{M})$ such that
$a_n\xrightarrow{t(\mathcal{M})}0$ and $\delta(a_n)\xrightarrow{t(\mathcal{M})}b$.

By Lemma \ref{l_e4} passing, if necessary, to a subsequence, we may assume that $a_n=b_n+c_n$, where $b_n\in\mathcal{M},\ c_n\in LS(\mathcal{M}),\ n\in\mathbb{N},\ \|b_n\|_{\mathcal{M}}\rightarrow 0$ and
$s(|c_n|)\xrightarrow{t(\mathcal{M})}0$ as $n\rightarrow\infty$.

Since the restriction $\delta|_{\mathcal{M}}$ of the derivation $\delta$ to the von Neumann algebra $\mathcal{M}$ is a derivation from $\mathcal{M}$ into the quasi-normed $\mathcal{M}$-bimodule $(\mathcal{E},\|\cdot\|_\mathcal{E})$, by Theorem \ref{t_Ring} we have
$\|\delta(b_n)\|_{\mathcal{E}}\rightarrow 0$. Lemma \ref{l_e2} implies that $\delta(b_n)\xrightarrow{t(\mathcal{M})}0$.

By verbatim repetition of the second part of the proof \cite[Lemma 6.9]{BCS_3} we obtain that $\delta(c_n)\xrightarrow{t(\mathcal{M})}0$.

Thus,
$\delta(a_n)=\delta(b_n)+\delta(c_n)\xrightarrow{t(\mathcal{M})}0$, that contradicts to the inequality  $b\neq 0$. Consequently, the operator
$\delta$ has closed graph, therefore $\delta$ is
$t(\mathcal{M})$-continuous.
\end{proof}

For the proof of the main result we also need the following property of the algebra $LS(\mathcal{M})$.
\begin{theorem} \cite[Theorem 1]{BS}
\label{t_bs} Let $\mathcal{M}$ be a von Neumann algebra and $a\in
LS_h(\mathcal{M})$. Then there exist a self-adjoint operator
$c$ in the centre of the $*$-algebra $LS(\mathcal{M})$ and a family
$\{u_\varepsilon\}_{\varepsilon>0}$ of unitary operators from
$\mathcal{M}$ such that
\begin{equation}
\label{e_bs}
|[a,u_\varepsilon]|\geq (1-\varepsilon)|a-c|.
\end{equation}
\end{theorem}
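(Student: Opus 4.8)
The plan is to construct the central operator $c$ and the unitaries $u_\varepsilon$ directly from the spectral family $\{E_\lambda\}=\{E_\lambda(a)\}\subset\mathcal{M}$ of $a$, working throughout inside $LS(\mathcal{M})$; no reduction to bounded $a$ is attempted, since in a factor the only central projections are $0$ and $\mathbf{1}$, so an unbounded $a$ cannot be centrally truncated. The conceptual engine is the elementary identity for a symmetry (self-adjoint unitary) $u\in\mathcal{M}$: if $u(a-c)u=-(a-c)$ with $c$ central, then $[a,u]=[a-c,u]=2(a-c)u$, whence $|[a,u]|=2\,|a-c|$ since $u$ is a symmetry. Thus it suffices to manufacture, for each $\varepsilon$, a unitary that \emph{reverses} $a$ about a suitable central $c$ on most of its spectrum, the unreversed part being forced to sit where $|a-c|$ is already negligible; the factor $(1-\varepsilon)$ (rather than $2-\varepsilon$) reflects that the reversal is realized by pairing bands at \emph{unequal} distance to $c$, where only a single factor is guaranteed.

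Second, I would pin down $c$ as a central \emph{median} of $a$: a central self-adjoint operator for which the positive and negative parts $(a-c)_+$ and $(a-c)_-$ carry centrally balanced spectral mass, so that the spectral projections above and below $c$ admit a mass-preserving correspondence. Existence follows from the order completeness of $\mathcal{P}(\mathcal{Z}(\mathcal{M}))$ together with a central-support exhaustion: one sweeps a central threshold upward and stops where the central supports (respectively the central traces, in the finite summand) of $\{a>c\}$ and $\{a<c\}$ coincide. On the central projection where $a$ is one-sidedly unbounded this median degenerates to the finite essential extreme, which is still the correct balancing point for the pairing below.

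Third, fix $\varepsilon>0$ and cut the spectrum of $a-c$ into \emph{geometric} bands on each side, i.e. bands on which $|a-c|$ varies by a factor at most $1+\varepsilon$; let $f_k\in\mathcal{M}$ be the associated spectral projections. Matching the bands greedily from the extremes inward and pairing each high band with a low band of equal mass, I would realize each pairing by a partial isometry $w\in\mathcal{M}$ and assemble these into the symmetry $u_\varepsilon=w+w^{*}+(\mathbf{1}-w^{*}w-ww^{*})$, which swaps the paired bands and fixes the projection $\{a=c\}$, where both sides of the desired inequality vanish. On a pair carrying top value $c+\alpha$ and bottom value $c-\beta$ the swap reproduces the $2\times2$ model of the computation above, so $|[a,u_\varepsilon]|$ equals $\alpha+\beta\ge\max(\alpha,\beta)$ there, while $|a-c|\le(1+\varepsilon)\max(\alpha,\beta)$ by the geometric choice of bands; this yields the pointwise bound $|[a,u_\varepsilon]|\ge(1-\varepsilon)|a-c|$ on the swapped part.

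Finally, the main obstacle is precisely the realization of the mass-preserving pairing by genuine partial isometries of $\mathcal{M}$ and the propagation of the estimate as an \emph{operator} inequality, not merely a quasi-norm one, over all of $\mathbf{1}$. Two equally weighted spectral bands on opposite sides of $c$ need not be equivalent projections, so I would invoke the comparison theorem for projections \cite{Tak}: split by a central projection $z$ with $z f_k\preceq z f_{k'}$ and $z^{\bot}f_{k'}\preceq z^{\bot}f_k$, swap the equivalent sub-parts, and iterate the matching on the residual central defects, which the median condition drives down toward the fixed projection $\{a=c\}$. Showing that this iteration exhausts $\{a\neq c\}$ (using the order completeness and continuous-geometry structure of $\mathcal{P}(\mathcal{M})$), assembling the partial isometries into a single $u_\varepsilon\in\mathcal{M}$, and treating the type I, II and III summands uniformly—where equivalence of equal-mass projections is governed respectively by rank, central trace, and the automatic equivalence of properly infinite projections—is the technical heart of the argument.
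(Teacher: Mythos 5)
First, a point of comparison that matters here: the paper itself contains no proof of Theorem \ref{t_bs} --- it is imported verbatim from \cite{BS}, where its proof occupies most of a thirty-page article. So your sketch can only be measured against that argument. In spirit it is the right kind of strategy (a central ``median'' $c$, geometrically narrow spectral bands, symmetries swapping bands across $c$, comparison theory to realize the pairing), and your local computations are correct: the identity $|[a,u]|=2|a-c|$ for a symmetry $u$ reversing $a-c$, and the block estimate $|[a,u_\varepsilon]|\geq(\alpha+\beta)(p+q)$ on a swapped pair of narrow bands, both check out and do produce the factor $(1-\varepsilon)$.

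The gaps, however, sit exactly where the theorem's content lies. (a) Your criterion for $c$ is wrong as stated: stopping the central sweep ``where the central supports of $\{a>c\}$ and $\{a<c\}$ coincide'' determines nothing, because central support is far too coarse an invariant --- in a factor every nonzero spectral projection has central support $\mathbf{1}$, so every $c$ strictly inside the spectrum passes your test. Concretely, in a type $\mathrm{II}_1$ factor with $\tau(E_\lambda(a))=\lambda$ on $[0,1]$, the choice $c=1/10$ satisfies your central-support condition, yet equivalence preserves the trace, so at most a trace-$1/10$ portion of the upper spectral mass can ever be paired below $c$; on the unpaired remainder your $u_\varepsilon$ acts as the identity, hence $[a,u_\varepsilon]=0$ there while $|a-c|$ is of order $1$. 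Thus on the finite summand $c$ must be a median with respect to the center-valued trace, on the properly infinite summand a completely different (purely comparison-theoretic) argument is needed, and these must be glued into a single, possibly unbounded, self-adjoint $c$ in the centre of $LS(\mathcal{M})$. (b) The exhaustive realization of the pairing --- proving that the iteration of comparison-theorem splittings terminates with residual contained in $\{a=c\}$, that pieces of bands are only ever matched with pieces of \emph{narrow} bands on the other side (otherwise the estimate fails on the partner), and that the resulting countable family of partial isometries assembles into one symmetry of $\mathcal{M}$ uniformly over the type $\mathrm{I}/\mathrm{II}/\mathrm{III}$ summands --- is what you yourself call the technical heart and leave unproven. (c) For unbounded $a\in LS_h(\mathcal{M})$ your greedy matching ``from the extremes inward'' is undefined, since no extreme band exists. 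As it stands this is a sound plan with correct local computations, but the unproven steps are essentially the whole of the cited proof in \cite{BS}, so it cannot be accepted as a proof of the statement.
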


Now, we give the main result of this paper.

\begin{theorem}
\label{t_mbm} Let $\mathcal{M}$ be a von Neumann algebra and let
$(\mathcal{E},\|\cdot\|_\mathcal{E})$ be a quasi-normed $\mathcal{M}$-bimodule of locally
measurable operators. Then any derivation
$\delta:\mathcal{M}\rightarrow \mathcal{E}$ is inner, that is there exists an element $d\in\mathcal{E}$ such that
$\delta(x)=[d,x],x\in\mathcal{M}$, in addition
$\|d\|_{\mathcal{E}}\leq
2C_\mathcal{E}\|\delta\|_{\mathcal{M}\rightarrow\mathcal{E}}$. If
$\delta^*=\delta$ or $\delta^*=-\delta$ then $d$ may be chosen so that
$\|d\|_{\mathcal{E}}\leq\|\delta\|_{\mathcal{M}\rightarrow\mathcal{E}}$.
\end{theorem}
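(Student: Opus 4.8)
The plan is to realize $\delta$ as the restriction of an inner derivation of the ambient algebra $LS(\mathcal{M})$ and then to relocate the implementing element inside $\mathcal{E}$ with quasi-norm control. By Theorem~\ref{t_Ring}, $\delta$ is automatically norm continuous from $(\mathcal{M},\|\cdot\|_\mathcal{M})$ to $(\mathcal{E},\|\cdot\|_\mathcal{E})$, so a finite constant $\|\delta\|_{\mathcal{M}\to\mathcal{E}}$ is available throughout. Since \eqref{e1} gives $\|\delta^*(x)\|_\mathcal{E}=\|\delta(x^*)\|_\mathcal{E}$, one has $\|\delta^*\|_{\mathcal{M}\to\mathcal{E}}=\|\delta\|_{\mathcal{M}\to\mathcal{E}}$; I would therefore treat the self-adjoint case first and recover the general statement at the end from $\delta=\mathrm{Re}(\delta)+i\,\mathrm{Im}(\delta)$.

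First I would extend $\delta$ to a derivation $\bar\delta$ of $LS(\mathcal{M})$ (the canonical extension of a derivation taking values in locally measurable operators) and verify $\bar\delta=\delta_a$ for some $a\in LS(\mathcal{M})$ by Theorem~\ref{t1}; this requires $t(\mathcal{M})$-continuity of $\bar\delta$. Because $t(\mathcal{M})$, the bimodule action, and $\delta$ itself all respect central projections (recall $\delta(z)=0$, $\delta(zx)=z\delta(x)$), continuity and innerness may be established on central summands and then assembled, the element $a=\sum_z z\,a_z$ being well defined in $LS(\mathcal{M})$. On each finite, countably decomposable central piece a faithful normal finite trace exists, so Proposition~\ref{p4} applies and yields $t(\mathcal{M})$-continuity there, whence Theorem~\ref{t1} gives local innerness; the properly infinite central summand is treated separately via a decomposition $\mathbf{1}=\sum_n s_ns_n^*$ into isometries $s_n^*s_n=\mathbf{1}$, which furnishes an implementing element directly. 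Gluing the summands produces a single $a\in LS(\mathcal{M})$ with $\delta(x)=[a,x]$ for every $x\in\mathcal{M}$.

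The decisive localization is then carried out by Theorem~\ref{t_bs}. For self-adjoint $\delta$ the element $a$ may be taken skew-adjoint, $a=ih$ with $h\in LS_h(\mathcal{M})$; Theorem~\ref{t_bs} supplies a central self-adjoint $c$ and unitaries $u_\varepsilon\in\mathcal{M}$ with $|[h,u_\varepsilon]|\ge(1-\varepsilon)|h-c|$. Since $[h,u_\varepsilon]=-i\,\delta(u_\varepsilon)\in\mathcal{E}$, the solidity property \eqref{e_solid} forces $h-c\in\mathcal{E}$, and \eqref{e1}--\eqref{e2} give $\|h-c\|_\mathcal{E}\le(1-\varepsilon)^{-1}\|\delta(u_\varepsilon)\|_\mathcal{E}\le(1-\varepsilon)^{-1}\|\delta\|_{\mathcal{M}\to\mathcal{E}}$. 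Letting $\varepsilon\to0$ and putting $d=i(h-c)$, centrality of $c$ yields $\delta_d=\delta_a=\delta$ together with $\|d\|_\mathcal{E}\le\|\delta\|_{\mathcal{M}\to\mathcal{E}}$; applying this to the self-adjoint parts $\mathrm{Re}(\delta)$, $\mathrm{Im}(\delta)$ and recombining through the quasi-triangle inequality then produces $d\in\mathcal{E}$ with the asserted bound $\|d\|_\mathcal{E}\le 2C_\mathcal{E}\|\delta\|_{\mathcal{M}\to\mathcal{E}}$.

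The step I expect to be the main obstacle is the $t(\mathcal{M})$-continuity of $\bar\delta$ for an \emph{arbitrary} von Neumann algebra: Proposition~\ref{p4} is available only when a faithful normal finite trace is present, so the genuine work lies in the central-decomposition reduction to that situation, in the self-contained treatment of the properly infinite summand, and in checking that these local conclusions assemble to a global derivation inner in $LS(\mathcal{M})$ before Theorem~\ref{t1} and Theorem~\ref{t_bs} can be brought to bear.
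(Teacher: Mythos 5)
Your proposal reproduces the paper's architecture almost exactly: extend $\delta$ to a derivation $\overline{\delta}$ of $LS(\mathcal{M})$, split the identity into central projections $z_\infty$ (properly infinite summand) and $z_j$ (summands carrying faithful normal finite traces), obtain $t(\mathcal{M})$-continuity summand by summand with Proposition~\ref{p4} doing the work on the finite parts, invoke Theorem~\ref{t1} to get $\overline{\delta}=\delta_a$ with $a\in LS(\mathcal{M})$, and then localize $a$ into $\mathcal{E}$ via Theorem~\ref{t_bs} and solidity \eqref{e_solid}, treating the self-adjoint case first (with the bound $\|d\|_\mathcal{E}\leq\|\delta\|_{\mathcal{M}\to\mathcal{E}}$ obtained by letting $\varepsilon\to 0$ against a fixed central $c$) and recombining through $\delta=\mathrm{Re}(\delta)+i\,\mathrm{Im}(\delta)$. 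That second, localization half of your argument is correct and coincides with the paper's proof, including the reduction to a skew-adjoint implementing element, which the paper justifies via Proposition~\ref{p2_2}.

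The genuine gap is exactly where you say it is, and your proposed repair is not the paper's and would not work as sketched. The paper does not construct anything by hand on the properly infinite summand: the extension $\overline{\delta}$ exists by \cite[Theorem 4.8]{BCS_2}; the $t(z_\infty\mathcal{M})$-continuity of $\overline{\delta}^{(z_\infty)}$ is automatic for \emph{any} derivation of $LS$ of a properly infinite von Neumann algebra by \cite[Theorem 3.3]{BCS_2}, with no reference to $\mathcal{E}$; and continuity of $\overline{\delta}$ is then assembled across central summands by \cite[Corollary 2.8]{BCS_2}, after which Theorem~\ref{t1} is applied globally. Your alternative --- extracting an implementing element on $z_\infty\mathcal{M}$ directly from isometries $s_n$ with $\sum_n s_ns_n^*=\mathbf{1}$ --- is left undeveloped, and as stated it is problematic: an expression such as $\sum_n s_n\delta(s_n^*)$ has no evident mode of convergence in $\mathcal{E}$ (a quasi-normed space not assumed complete, and not invariant under such infinite sums), nor in $(LS(z_\infty\mathcal{M}),t(z_\infty\mathcal{M}))$ without already knowing some continuity of the derivation, which is the very point at issue; the classical versions of this trick require a dual normal bimodule and weak-$*$ convergence. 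So the missing step is real, though it is one the paper closes by citation rather than by a new idea. A shared blemish, for what it is worth: the self-adjoint case gives $\|d_i\|_\mathcal{E}\leq\|\mathrm{Re}(\delta)\|_{\mathcal{M}\to\mathcal{E}}\leq C_\mathcal{E}\|\delta\|_{\mathcal{M}\to\mathcal{E}}$, so the recombination strictly yields $2C_\mathcal{E}^2\|\delta\|_{\mathcal{M}\to\mathcal{E}}$; the paper elides the same factor.
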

\begin{proof}
Let $\overline{\delta}$ be a  derivation on $LS(\mathcal{M})$ such that  $\overline{\delta}(x)=\delta(x)$
for all $x\in\mathcal{M}$ (see \cite[Theorem 4.8]{BCS_2}).

Choose pairwise orthogonal central projections  $\{z_\infty,z_i\}_{j\in J}$ such that $z_\infty$ + $\sup_{j\in J} z_j=\mathbf{1}$, $z_\infty\mathcal{M}$ is a properly infinite von Neumann algebra and on every von Neumann algebra $z_j\mathcal{M}$ exists a faithful normal finite trace.
By \cite[Theorem 3.3]{BCS_2} the derivation  $\overline{\delta}^{(z_\infty)}:=\overline{\delta}|_{LS(z_\infty\mathcal{M})}:LS(z_\infty\mathcal{M})\rightarrow LS(z_\infty\mathcal{M})$ is $t(z_\infty\mathcal{M})$-continuous. Since the von Neumann algebra $z_j\mathcal{M}$ is finite with a faithful normal finite trace  and for the derivation
$\overline{\delta}^{(z_j)}:=\overline{\delta}|_{LS(z_j\mathcal{M})}:LS(z_j\mathcal{M})\rightarrow LS(z_j\mathcal{M})$ the inclusion $\overline{\delta}^{(z_j)}(z_j\mathcal{M}) = \delta^{(z_j)}(z_j\mathcal{M}) \subset z_j\mathcal{E}$ holds, Proposition \ref{p4} implies that $\overline{\delta}^{(z_j)}$  is also  $t(z_j\mathcal{M})$-continuous for all $j\in J$. Therefore, by \cite[Corollary 2.8]{BCS_2}, the derivation $\overline{\delta}$ is  $t(\mathcal{M})$-continuous.
By Theorem \ref{t1} the derivation $\overline{\delta}$ is inner, that is
 there exists an element  $a\in LS(\mathcal{M})$, such that
$\overline{\delta}(x)=[a,x]$ for all $x\in LS(\mathcal{M})$.
It is clear that
$[a,\mathcal{M}]=\overline{\delta}(\mathcal{M})=\delta(\mathcal{M})\subset
\mathcal{E}$. Let us show that we can choose $d\in\mathcal{E}$ such that $\delta(x)=[d,x]$ for all $x\in\mathcal{M}$.

Let $a_1=Re(a)=(a+a^*)/2,\ a_2=Im(a)=(a-a^*)/2i$. Since $[a^*,x]=-[a,x^*]^*\in\mathcal{E}$ for any $x\in\mathcal{M}$, it follows that $[a_1,x]=[a+a^*,x]/2\in\mathcal{E}$ and
$[a_2,x]=[a-a^*,x]/2i\in\mathcal{E}$ for all $x\in\mathcal{M}$.

By Theorem \ref{t_bs} and taking $\varepsilon=1/2$ in (\ref{e_bs}) we obtain that there exist  self-adjoint operators
$c_1,c_2$ in the centre of the $*$-algebra $LS(\mathcal{M})$
 and unitary operators
$u_1,u_2\in\mathcal{M}$ such that
$$2|[a_i,u_i]|\geq |a_i-c_i|,\ i=1,2.$$ Since
$[a_i,u_i]\in\mathcal{E}$ and $\mathcal{E}$ is
$\mathcal{M}$-bimodule of locally
measurable operators we have that $d_i:=a_i-c_i\in\mathcal{E}$, $i=1,2$ (see \eqref{e_solid}). Therefore $d=d_1+id_2\in\mathcal{E}$.
Since $c_1,c_2$ are central elements from $LS(\mathcal{M})$ it follows that $\delta(x)=[a,x]=[d,x]$ for all $x\in\mathcal{M}$.

Now, suppose that $\delta^*=\delta$. In this case,
$[d+d^*,x]=[d,x]-[d,x^*]^*=\delta(x)-(\delta(x^*))^*=\delta(x)-\delta^*(x)=0$
for any $x\in\mathcal{M}$. Consequently, the operator
$Re(d)=(d+d^*)/2$ commutes with every elements from $\mathcal{M}$, and by Proposition  \ref{p2_2}, $Re(d)$ is a central element in the algebra $LS(\mathcal{M})$. Therefore we may suggest that
$\delta(x)=[d,x],\ x\in\mathcal{M}$, where $d=ia,\ a\in
\mathcal{E}_h$. According to Theorem \ref{t_bs} there exist
$c=c^*$ from the centre of the algebra $LS(\mathcal{M})$ and a family
$\{u_\varepsilon\}_{\varepsilon>0}$ of unitary operators from
$\mathcal{M}$ such that
$$|[a,u_\varepsilon]|\geq (1-\varepsilon)|a-c|.$$
For $b=ia-ic$ and $\varepsilon=1/2$ we have $$|b|=|a-c|\leq
2|[a,u_{1/2}]|=2|[-id,u_{1/2}]|=2[d,u_{1/2}]\in\mathcal{E}.$$
Consequently, $b\in\mathcal{E}$, moreover,
$$\delta(x)=[d,x]=[ia,x]=[b,x]$$
for all $x\in\mathcal{M}$. Since
$$(1-\varepsilon)|b|=(1-\varepsilon)|a-c|\stackrel{(\ref{e_bs})}{\leq} |[a,u_\varepsilon]|=|[d,u_\varepsilon]|=|\delta(u_\varepsilon)|,$$
it follows that
$$(1-\varepsilon)\|b\|_{\mathcal{E}}\stackrel{(\ref{e2})}{\leq} \|\delta(u_\varepsilon)\|_{\mathcal{E}}\leq \|\delta\|_{\mathcal{M}\rightarrow\mathcal{E}}$$
for all $\varepsilon>0$, that implies the inequality
$\|b\|_{\mathcal{E}}\leq
\|\delta\|_{\mathcal{M}\rightarrow\mathcal{E}}$.

If $\delta^*=-\delta$, then taking $Im(d)$ instead of $Re(d)$ and repeating previous proof we obtain that $\delta(x)=[b,x]$, where
$b\in\mathcal{E}$ and $\|b\|_{\mathcal{E}}\leq
\|\delta\|_{\mathcal{M}\rightarrow\mathcal{E}}$.

Now, suppose that $\delta\neq\delta^*$ and $\delta\neq
-\delta^*$. Equality (\ref{e1}) implies that
\begin{gather*}
\begin{split}
\|\delta^*\|_{\mathcal{M}\rightarrow\mathcal{E}}&=\sup\{\|\delta(x^*)^*\|_{\mathcal{E}}:\
\|x\|_{\mathcal{M}}\leq 1\}\\&= \sup\{\|\delta(x)\|_{\mathcal{E}}:\
\|x\|_{\mathcal{M}}\leq
1\}=\|\delta\|_{\mathcal{M}\rightarrow\mathcal{E}}.
\end{split}
\end{gather*}
Consequently,
$$\|Re(\delta)\|_{\mathcal{M}\rightarrow\mathcal{E}}=
2^{-1}\|\delta+\delta^*\|_{\mathcal{M}\rightarrow\mathcal{E}}\leq
C_\mathcal{E}\|\delta\|_{\mathcal{M}\rightarrow\mathcal{E}}.$$
Similarly,
$\|Im(\delta)\|_{\mathcal{M}\rightarrow\mathcal{E}}\leq C_\mathcal{E}
\|\delta\|_{\mathcal{M}\rightarrow\mathcal{E}}$. Since
$(Re(\delta))^*=Re(\delta)$, $(Im(\delta))^*=Im(\delta)$, there exist $d_1,d_2\in\mathcal{E}$, such that
$Re(\delta)(x)=[d_1,x],\ Im(\delta)(x)=[d_2,x]$ for all
$x\in\mathcal{M}$ and
$\|d_i\|_{\mathcal{E}}\leq\|\delta\|_{\mathcal{M}\rightarrow\mathcal{E}}$,
$i=1,2$. Taking $d=d_1+id_2$, we have that $d\in\mathcal{E}$,
$\delta(x)=(Re(\delta)+i\cdot
Im(\delta))(x)=[d_1,x]+i[d_2,x]=[d,x]$ for all $x\in\mathcal{M}$,
in addition $\|d\|_{\mathcal{E}}\leq
2C_\mathcal{E}\|\delta\|_{\mathcal{M}\rightarrow\mathcal{E}}$.
\end{proof}

Now, let us give an application of Theorem \ref{t_mbm} to derivations on $\mathcal{M}$ with values in quasi-normed symmetric spaces.

Let $\mathcal{M}$ be a semifinite von Neumann algebra and let $\tau$
be a faithful normal semifinite trace on $\mathcal{M}$. Let
$S(\mathcal{M},\tau)$ be the $*$-algebra of all $\tau$-measurable
operators affiliated with $\mathcal{M}$.
For each $x\in S(\mathcal{M},\tau)$ and $t>0$ we define the decreasing rearrangement (or generalised singular value function) by setting
\begin{gather*}
\begin{split}
\mu_t(x)&=\inf\{\lambda>0:\tau(E_\lambda^\bot(|x|))\leq
t\}\\&=\inf\{\|x(\mathbf{1}-e)\|_\mathcal{M}:e\in\mathcal{P}(\mathcal{M}),\tau(e)\leq
t\}.
\end{split}
\end{gather*}

Let $\mathcal{E}$ be a linear subspace in $S(\mathcal{M},\tau)$ equipped with a quasi-norm  $\|\cdot\|_{\mathcal{E}}$ satisfying the following condition:
$$\text{If}\ x\in S(\mathcal{M},\tau),\ y\in\mathcal{E}\
\text{and}\ \mu_t(x)\leq\mu_t(y)\ \text{then}\ x\in\mathcal{E}\
\text{and}\ \|x\|_{\mathcal{E}}\leq\|y\|_{\mathcal{E}}.$$ In this
case, the pair $(\mathcal{E},\|\cdot\|_{\mathcal{E}})$ is called
\emph{quasi-normed symmetric spaces of $\tau$-measurable operators}.
It is easy to see that every quasi-normed symmetric space of measurable operators is a
quasi-normed $\mathcal{M}$-bimodule of locally
measurable operators, and therefore Theorem
\ref{t_mbm} implies the following

\begin{corollary}
\label{c_last} Let $(\mathcal{E},\|\cdot\|_{\mathcal{E}})$ be a quasi-normed
symmetric spaces of $\tau$-measurable operators, affiliated
with a semifinite von Neumann algebra $\mathcal{M}$ with a
faithful semifinite normal trace $\tau$. Then any derivation
$\delta:\mathcal{M}\rightarrow\mathcal{E}$ is continuous and
there exists $d\in\mathcal{E}$ such that $\delta(x)=[d,x]$ for all
$x\in\mathcal{M}$ and $\|d\|_{\mathcal{E}}\leq
2C_\mathcal{E}\|\delta\|_{\mathcal{M}\rightarrow\mathcal{E}}.$
\end{corollary}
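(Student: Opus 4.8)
The plan is to reduce the problem to the known innerness of $t(\mathcal{M})$-continuous derivations on $LS(\mathcal{M})$ (Theorem \ref{t1}) and then to correct the implementing element so that it lands in $\mathcal{E}$ with the required quasi-norm bound. First I would extend $\delta$ to a derivation $\overline{\delta}$ on the whole algebra $LS(\mathcal{M})$ agreeing with $\delta$ on $\mathcal{M}$; such an extension is available from the literature. The next task is to verify that $\overline{\delta}$ is $t(\mathcal{M})$-continuous. For this I would split the identity into pairwise orthogonal central projections: a properly infinite part $z_\infty$ and a family $\{z_j\}_{j\in J}$ on each of which there is a faithful normal finite trace. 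On the properly infinite summand every derivation of $LS$ is automatically $t(\mathcal{M})$-continuous, while on each finite summand the hypothesis $\delta(\mathcal{M})\subset\mathcal{E}$ is inherited and Proposition \ref{p4} supplies $t(z_j\mathcal{M})$-continuity. Gluing these continuities over the central decomposition yields $t(\mathcal{M})$-continuity of $\overline{\delta}$, whence by Theorem \ref{t1} there is $a\in LS(\mathcal{M})$ with $\overline{\delta}(x)=[a,x]$ for all $x\in LS(\mathcal{M})$; in particular $[a,x]=\delta(x)\in\mathcal{E}$ for every $x\in\mathcal{M}$.

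The crux of the argument, and the step I expect to be the main obstacle, is passing from the abstract implementing element $a\in LS(\mathcal{M})$ to an element $d\in\mathcal{E}$, since a priori only the commutators $[a,x]$, and not $a$ itself, are known to belong to $\mathcal{E}$. Here I would exploit Theorem \ref{t_bs}: writing $a=a_1+ia_2$ with $a_1,a_2\in LS_h(\mathcal{M})$, and noting that $[a_i,x]\in\mathcal{E}$ for all $x\in\mathcal{M}$, I would choose central self-adjoint operators $c_i$ and unitaries $u_i\in\mathcal{M}$ so that $2|[a_i,u_i]|\geq|a_i-c_i|$. Because $[a_i,u_i]\in\mathcal{E}$ and $\mathcal{E}$ is solid (condition \eqref{e_solid}), the operator $d_i:=a_i-c_i$ lies in $\mathcal{E}$; the central corrections $c_i$ do not affect commutators, so $d:=d_1+id_2\in\mathcal{E}$ still implements $\delta$.

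Finally I would address the quasi-norm estimates. In the self-adjoint case $\delta^*=\delta$, a short computation shows that $\mathrm{Re}(d)$ commutes with all of $\mathcal{M}$ and hence, by density of $\mathcal{M}$ in $(LS(\mathcal{M}),t(\mathcal{M}))$ (Proposition \ref{p2_2}), is central; thus $d$ may be taken of the form $ia$ with $a\in\mathcal{E}_h$, and applying Theorem \ref{t_bs} together with the monotonicity \eqref{e2} gives $(1-\varepsilon)\|b\|_\mathcal{E}\leq\|\delta(u_\varepsilon)\|_\mathcal{E}\leq\|\delta\|_{\mathcal{M}\to\mathcal{E}}$ for every $\varepsilon>0$, where $b\in\mathcal{E}$ is the resulting implementing element. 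Letting $\varepsilon\to0$ yields the sharp bound $\|b\|_\mathcal{E}\leq\|\delta\|_{\mathcal{M}\to\mathcal{E}}$, and the skew-adjoint case $\delta^*=-\delta$ is symmetric. For a general $\delta$ I would decompose $\delta=\mathrm{Re}(\delta)+i\,\mathrm{Im}(\delta)$ into self-adjoint derivations, apply the sharp bound to each summand, and assemble the implementing elements using the quasi-triangle inequality, which accounts for the factor $2C_\mathcal{E}$ in the final estimate.
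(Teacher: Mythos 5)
Your proposal is mathematically sound, but it is not so much a proof of the corollary as a re-derivation, in situ, of the paper's main theorem: the extension of $\delta$ to $LS(\mathcal{M})$, the central decomposition into a properly infinite summand and finite-trace summands, the appeal to Proposition \ref{p4} and Theorem \ref{t1}, the correction by central elements via Theorem \ref{t_bs}, and the norm bookkeeping are exactly the steps of the proof of Theorem \ref{t_mbm}. The paper instead proves the corollary by a one-line reduction: it observes that every quasi-normed symmetric space of $\tau$-measurable operators is a quasi-normed $\mathcal{M}$-bimodule of locally measurable operators and then cites Theorem \ref{t_mbm}. The reduction is shorter and isolates the single fact that is specific to the corollary; your inlined version is self-contained but duplicates an argument already available as a black box.

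This comparison exposes the one real omission in your write-up: every tool you invoke --- Proposition \ref{p4}, the solidity property \eqref{e_solid}, the monotonicity \eqref{e2} --- is formulated for quasi-normed $\mathcal{M}$-bimodules of locally measurable operators, and you never check that the symmetric space $\mathcal{E}$ is one. That check is precisely the content of the paper's proof and should be made explicit: for $a,b\in\mathcal{M}$ and $x\in\mathcal{E}$ one has $\mu_t(axb)\leq\|a\|_\mathcal{M}\|b\|_\mathcal{M}\mu_t(x)$, so the defining property of a symmetric space yields $axb\in\mathcal{E}$ and $\|axb\|_\mathcal{E}\leq\|a\|_\mathcal{M}\|b\|_\mathcal{M}\|x\|_\mathcal{E}$, i.e. \eqref{ChIVeq21}; solidity in the sense of \eqref{e_solid}, with respect to all of $LS(\mathcal{M})$, then follows from Proposition \ref{mchext}. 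Note that you genuinely need solidity relative to $LS(\mathcal{M})$ (not merely $S(\mathcal{M},\tau)$) at the step where you conclude $d_i=a_i-c_i\in\mathcal{E}$, since $a_i-c_i$ is a priori only locally measurable. Finally, a small point you share with the paper: assembling $d=d_1+id_2$ from the self-adjoint case gives $\|d_i\|_\mathcal{E}\leq\|Re(\delta)\|_{\mathcal{M}\rightarrow\mathcal{E}}\leq C_\mathcal{E}\|\delta\|_{\mathcal{M}\rightarrow\mathcal{E}}$, and the quasi-triangle inequality then yields $\|d\|_\mathcal{E}\leq 2C_\mathcal{E}^2\|\delta\|_{\mathcal{M}\rightarrow\mathcal{E}}$; obtaining the stated factor $2C_\mathcal{E}$ would require $\|d_i\|_\mathcal{E}\leq\|\delta\|_{\mathcal{M}\rightarrow\mathcal{E}}$, which neither your argument nor the paper's justifies.
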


In particular, Corollary \ref{c_last} implies that every derivation on $\mathcal{M}$ with values in noncommutative $L_p$-spaces $L_p(\mathcal{M},\tau), 0<p\leq \infty,$ is inner.

\end{document}